\theoremstyle{plain}
\newtheorem{theorem}{Theorem}[section]
\newtheorem{lemma}[theorem]{Lemma}
\newtheorem{prop}[theorem]{Proposition}
\newtheorem{cor}[theorem]{Corollary}
\theoremstyle{definition}
\newtheorem{definition}[theorem]{Definition}
\newtheorem{definition-lemma}[theorem]{Definition/Lemma}
\newtheorem{example}[theorem]{Example}
\newtheorem{observation}[theorem]{Observation}
\theoremstyle{remark}
\newtheorem{notation}[theorem]{Notation}
\newcommand{\longsquigarrow}{\xymatrix{{}\ar@{~>}[r]&{}}}
\newcommand{\Z}{\mathbb{Z}}
\newcommand{\R}{\mathbb{R}}
\newcommand{\Hom}{\text{Hom}}
\newcommand{\into}{\hookrightarrow}
\newcommand{\onto}{\twoheadrightarrow}
\renewcommand{\H}{\mathrm{H}}
\renewcommand{\Bar}{\operatorname{Bar}}
\newcommand{\Cyc}{\operatorname{Cyc}}
\newcommand{\desus}{\mathbf{s}^{-1}}
\newcommand{\sus}{\mathbf{s}}
\newcommand{\maps}{\colon}
\begin{document}
%Header-Make sure you update this information!!!!

\noindent
\title{
  The cyclic bar construction and fundamental groups
  }
\begin{abstract}
We determine the $0$-th Hochschild homology of the associative algebra of simplicial cochains valued in a PID: it consists of the ``finite-type" homotopy invariants of free loops, equivalently finite-type class functions on the fundamental group. One major motivation for this calculation is joint work in progress aiming to geometrically construct invariants of links in the $3$-sphere as well as other $3$-manifolds, and to realize Milnor's linking numbers as evaluations of $0$-th Hochschild homology classes.
\end{abstract}

\author{Nir Gadish}
\address{Department of Mathematics, University of Pennsylvania, Philadelphia, PA}
\email{\url{ngadish@math.upenn.edu}}

\subjclass[2020]{
57M05  	%Fundamental group, presentations, free differential calculus
57K16   %Finite-type and quantum invariants, topological quantum field theories
20J05  	%Homological methods in group theory
16S34  	%Group rings
20-08  	%Computational methods for problems pertaining to group theory
}

\keywords{letter-braiding numbers,
cyclic bar construction, Hochschild homology, fundamental groups,
finite type invariants}
 
\maketitle

\section{Introduction}
Given an associative augmented dg-algebra $(A^\bullet,d)$, its cyclic Bar construction, or Hochschild complex, is the complex of tensors $\Cyc(A):= A\otimes T (\bar{A})$, where $T(-)$ is the tensor algebra functor applied to (the degree shift by $-1$ of) the augmentation ideal $\bar{A}$. This has natural differential $d_{\Cyc}$ of algebraic and geometric significance, so that for any \underline{simply connected} topological space $Y$ with singular cochain algebra $A = C^*_{Sing}(Y;\Z)$ there is a well-known natural isomorphism
\[
\H^*(\Cyc(A),d_{\Cyc}) \cong \H^*(LY;\Z)
\]
where $LY = \operatorname{Maps}(S^1,M)$ is the free loop space of $Y$ (see e.g. \cite{jones_cyclic_1987,loday_free_2011}). Alternatively, when $Y$ is a simply connected smooth manifold and $A$ is its deRham algebra $\Omega^*_{dR}(Y)$, an analogous isomorphism holds with the $\R$-valued cohomology of $LY$, realized by Chen's iterated integrals \cite{getzler_differential_1991}.

These examples leave open the question of what $\Cyc(A)$ is calculating when $Y$ is connected but not simply connected. In that case, the connected components $\pi_0(LY)$ are indexed by the set of conjugacy classes of $\pi_1(Y)$, so one might expect $\H^0(\Cyc(A))$ to compute the set class functions, that is, conjugation-invariant functions $\pi_1(Y)\to \Z$. This optimistic guess fails, but not too badly, as our main result shows.

Below, we state results for simplicial sets. This setup effectively includes all topological spaces via the singular simplicial set construction $Y \mapsto \operatorname{Sing}_\bullet(Y)$, but also includes finite combinatorial models of spaces and groups of interest in applications. In particular, it is possible to implement the isomorphisms below algorithmically for finitely presented groups and finite 2-complexes.

Let $R$ be any PID, e.g. $R = \Z, \R$ or $\Z/m\Z$. If $G$ is a group, then the group algebra $R[G]$ has a natural $G$-action by conjugation. Its \emph{augmentation ideal} $I$, generated by differences $\{g-1 \mid g\in G \}$ is invariant under conjugation by $G$. A function $G\to R$ is said to be of \emph{finite type} $n$ is its $R$-linear extension $R[G]\to R$ annihilates $I^{n+1}$, e.g. constants are of type $0$ and homomorphisms are of type $1$.
\begin{theorem}\label{thm-main}
    Let $Y$ be a connected pointed simplicial set, with finitely generated fundamental group $\pi_1(Y)$. Let $C^*_{\Delta}(Y;R)$ denote the $R$-valued simplicial cochains algebra with its standard Alexander--Whitney cup product, and $\Cyc_{\Delta}(Y;R)$ the associated cyclic Bar complex (a.k.a. the Hochschild complex, defined below).
    Then there is a natural isomorphism of filtered $R$-modules
    \begin{equation}\label{eq:main iso}
    \H^0(\Cyc_{\Delta}(Y;R)) \xrightarrow{\sim} \varinjlim_{n} \Hom_{R}\left( R[\pi_1(Y)]/I^n , R\right)^{\pi_1(Y)}
    \end{equation}
    where the target is the module all finite type functions $\pi_1(Y)\to R$ that are furthermore invariant under conjugation by $\pi_1(Y)$, that is, $R$-valued class functions of finite type.
    \end{theorem}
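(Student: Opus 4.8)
The plan is to realize the comparison map through a combinatorial iterated-integral (letter-braiding) pairing and then to match the bar-length filtration with the $I$-adic finite-type filtration. I would work with a one-vertex (reduced) model of $Y$, which I may assume after contracting a spanning tree; this is harmless because the target of \eqref{eq:main iso} depends only on $\pi_1(Y)$ and the construction below is natural in $Y$. In a reduced model the augmentation ideal sits in positive cochain degree, so every desuspended bar factor $\desus\bar A$ has non-negative degree, $\Cyc_\Delta(Y;R)$ is non-negatively graded, and its degree-$0$ part is $C^0\otimes T(\desus C^1)\cong T(C^1)$, the tensor module on the $1$-cochains. Since there is nothing in degree $-1$, there are no coboundaries, and $\H^0$ is exactly the module $Z^0\subseteq T(C^1)$ of degree-$0$ cocycles. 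As the differential out of degree $0$ only involves $C^1$, $C^2$, the internal differential and the Alexander--Whitney product, this already exhibits $\H^0$ as computed from the $2$-skeleton, hence from a presentation of $\pi_1(Y)$.

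Next I would interpret a tensor $a_1\otimes\cdots\otimes a_k$ of $1$-cochains as the functional sending a word of composable edges $w=e_1\cdots e_m$ to $\sum_{i_1<\cdots<i_k}a_1(e_{i_1})\cdots a_k(e_{i_k})$, the order-preserving matching sum; this extends $R$-linearly to a pairing $T(C^1)\otimes T(R[Y_1])\to R$. The heart of the argument is that, evaluated on a $2$-simplex $\sigma$, the cocycle equation combines the internal differential $(da)(\sigma)=a(d_0\sigma)-a(d_1\sigma)+a(d_2\sigma)$ with the cup products $(a_i\cup a_{i+1})(\sigma)=a_i(d_2\sigma)\,a_{i+1}(d_0\sigma)$, and these together express exactly invariance under the relations $d_1\sigma=d_2\sigma\cdot d_0\sigma$ presenting $\pi_1(Y)$. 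Thus $\omega\in Z^0$ if and only if its functional descends to an $R$-linear map $R[\pi_1(Y)]\to R$. Finiteness of type is then forced by the length filtration: a length-$\le n$ tensor annihilates $I^{n+1}$, since $\langle a_1\otimes\cdots\otimes a_k,(g_1-1)\cdots(g_{n+1}-1)\rangle=0$ once $k\le n$; so the bar-length filtration on $Z^0$ maps to the finite-type filtration on $\Hom_R(R[\pi_1(Y)]/I^{n+1},R)$.

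Finally I would account for cyclic symmetry and prove bijectivity. The cyclic closing-up term $\pm\,a_k\cdot a_0$ in the Hochschild differential forces $\langle\omega,wv\rangle=\langle\omega,vw\rangle$ for cocycles, i.e.\ conjugation-invariance of the resulting function, which explains the superscript $\pi_1(Y)$ in \eqref{eq:main iso}. Injectivity of the comparison map is the non-degeneracy of the pairing weight-by-weight. I expect the main obstacle to be \emph{surjectivity}: realizing a prescribed finite-type class function of type $n$ by an explicit length-$\le n$ cocycle. I would do this by induction on the weight, solving the cocycle equation one bar-length at a time; the obstruction to extending a weight-$(n-1)$ solution lives in a first cohomology group of the associated graded of $\Cyc_\Delta(Y;R)$, and its vanishing is precisely where the hypotheses enter. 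Here $R$ being a PID lets me split the relevant short exact sequences of associated-graded modules (keeping the graded pieces free), while finite generation of $\pi_1(Y)$ makes each $I^n/I^{n+1}$ finitely generated, so the extension problem is finite-dimensional linear algebra at every stage. Assembling these weight-graded isomorphisms compatibly and taking the colimit over $n$ then yields the natural filtered isomorphism \eqref{eq:main iso}.
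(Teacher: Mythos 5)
Your overall framework---reduce to a one-vertex model, identify $\H^0$ with degree-zero cocycles in $T(C^1)$, pair tensors with words by the letter-braiding sum, and match the tensor-length filtration with the $I$-adic one---is consistent with how the paper sets things up. But the proposal has two genuine gaps, and they are precisely the two places where all the work lives.

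First, surjectivity onto finite-type functions. The paper does not prove this here at all: it imports, as a black box, Theorem 1.1(4) of the prequel \cite{gadish_letter-braiding_2023}, which says that for finitely generated $\pi_1(Y)$ letter braiding is an isomorphism $\H^0_{\Bar}(Y;R)\xrightarrow{\sim}\varinjlim_n\Hom_R(R[\pi_1(Y)]/I^n,R)$. Your plan is to reprove this from scratch by induction on bar-length, asserting that ``the obstruction to extending a weight-$(n-1)$ solution lives in a first cohomology group of the associated graded, and its vanishing is precisely where the hypotheses enter.'' You never identify that group or explain why it vanishes; it does not vanish for formal reasons, and ``finite-dimensional linear algebra at every stage'' does not tell you the linear system is solvable. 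This is a substantial theorem in its own right (essentially the PID-coefficient analogue of Chen's $\pi_1$ deRham theorem), and as written the step is a placeholder, not an argument.

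Second, and this is the actual new content of the paper, you only prove one direction of the equivalence between cycle-invariance of a tensor and conjugation-invariance of its letter-braiding function. Your observation that the wrap-around term of the Hochschild differential forces $\langle\omega,wv\rangle=\langle\omega,vw\rangle$ gives the easy implication (cyclic cocycle $\Rightarrow$ class function). For surjectivity onto class functions you need the converse: a finite-type \emph{class} function, already known to be realized by some bar cocycle $T$, must be realized by a \emph{cycle-invariant} one. The paper proves this (Lemma \ref{lem:cycle vs conjugation}) by a nontrivial group-ring computation: expanding $(s_1-1)\cdots(s_k-1)$ via Fox's identity, using conjugation-invariance to rotate the leading letter, concluding $I_T=I_{\sigma T}$ on all of $R[F_S]$, and then invoking uniqueness of letter-braiding realizations to get $T=\sigma T$; this is then propagated from the $1$-skeleton to $Y$ by a pullback/cube argument resting on the long exact sequence of $0\to\Cyc(A;\bar A)\to\Cyc(A)\to\Bar(A)\to 0$, whose connecting map is $\iota(\sigma-1)$. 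Your proposal contains no mechanism for this converse, so even granting the prequel's surjectivity theorem, the map you construct is not shown to hit every finite-type class function.
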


For an algebraically presented group, the isomorphism in \eqref{eq:main iso} is given explicitly by \emph{letter-braiding}: if $G=F_S$ is the free group on elements $s\in S$, and $h_1,\ldots,h_n\maps F_S\to R$ is an $n$-tuple of group homomorphisms, we define in \cite{gadish_letter-braiding_2023} the letter-braiding invariant of $\bar{h}=[h_1|\ldots|h_n]$ to be the function $I_{\bar{h}}\maps F_S\to R$ given as follows.
    For $w = s_1 s_2 \cdots s_k$ a word in generators and their inverses, i.e. $s_i \in S\cup S^{-1}$,
    \begin{equation}\label{eq:letter braiding}
        I_{\bar{h}}\maps w \longmapsto \sum_{1\leq i_1 <_w \, i_2 <_w  \,\ldots <_w \, i_n \leq k} h_1(s_{i_1}) h_1(s_{i_2}) \cdots h_n(s_{i_n}),
    \end{equation}
    where $i<_w \, j$ is the order relation
    \[
    i<_w j = \begin{cases}
        i<j & s_i\in S \\
        i\leq j & s_i\in S^{-1}.
    \end{cases}
    \]
    This formula realizes the map in \eqref{eq:main iso} when $Y$ is a wedge of $S$-many simplicial circles: in this case the $1$-cocycles $C^1_{\Delta}(Y;R)$ are the set of functions $S\to R$, equivalent to homomorphisms $F_S\to R$. But since every group has a presentation $F_S\onto G$, the naturality of the ismorphism in \eqref{eq:main iso} shows that it is uniquely characterized by letter-braiding of generators as in \eqref{eq:letter braiding}. 

    In a prequel to this work \cite{gadish_letter-braiding_2023}, we showed that letter-braiding determines a natural isomorphism between the ordinary (noncyclic) Bar construction $\Bar(A) := T(\desus\bar{A})$ and the set of all functions $\varinjlim\Hom_R (R[\pi_1(Y)]/I^n, R)$, without restricting to conjugation-invariant functions. The two flavors of Bar construction are related naturally by the identification
    \begin{equation}\label{eq:identification of cyc and bar}
    1\otimes [ a_1\otimes \ldots \otimes a_n ]\in \Cyc(A) \longleftrightarrow [a_1\otimes \ldots \otimes a_n] \in \Bar(A),
    \end{equation}
    which is an isomorphism in cohomological degree $0$, although their differentials differ. This extends to a chain map $\Cyc(A)\to \Bar(A)$ by applying the augmentation $A\to R$ to the left-most term of $A\otimes T(\bar{A})$.

    For a pointed simplicial set $Y$ denote $\Bar_{\Delta}(Y;R) = \Bar(C^*_{\Delta}(Y;R))$. We abbreviate $\H^*_{\Cyc}(Y;R) := \H^*(\Cyc_{\Delta}(Y;R))$ and $\H^*_{\Bar}(Y;R) := \H^*(\Bar_{\Delta}(Y;R))$.
    \begin{theorem}\label{thm:main pullback}
    Let $Y$ be connected pointed simplicial set. Then the augmentation $\Cyc_{\Delta}(Y;R)\to \Bar_{\Delta}(Y;R)$ from \eqref{eq:identification of cyc and bar} defines a pullback square of injections through letter braiding invariants
    \[
    \xymatrix{
    \H^0_{\Cyc}(Y;R) \ar@{^(->}[r] \ar@{^(->}[d] & \H^0_{\Bar}(Y;R) \ar@{^(->}[d] \\
    \operatorname{Func}(\pi_1(Y),R)^{\pi_1(Y)} \ar@{^(->}[r] & \operatorname{Func}(\pi_1(Y),R).
    }
    \]
    That is, the letter braiding invariants arising from the cyclic complex are exactly those letter braiding invariants that happen to also be conjugation-invariant.
    \end{theorem}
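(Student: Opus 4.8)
The plan is to present the square as a diagram of monomorphisms into $\operatorname{Func}(\pi_1(Y),R)$ and to use the elementary fact that a commutative square of injections of $R$-modules is a pullback precisely when the image of its top-left corner equals the intersection, inside the bottom-right corner, of the images of its two legs. The statement then splits into (i) identifying the four maps and checking they are injective, (ii) verifying commutativity, and (iii) matching the image of $\H^0_{\Cyc}(Y;R)$ with the intersection of the images of $\H^0_{\Bar}(Y;R)$ and of $\operatorname{Func}(\pi_1(Y),R)^{\pi_1(Y)}$. By naturality I would first reduce to a reduced model with a single $0$-simplex, so that $C^0_{\Delta}(Y;R)=R\cdot 1$ and, via the identification \eqref{eq:identification of cyc and bar}, the degree-$0$ cochains of both $\Cyc_{\Delta}(Y;R)$ and $\Bar_{\Delta}(Y;R)$ are the \emph{same} module $M=\bigoplus_{n\ge 0}\bigl(\bar{C}^1_{\Delta}(Y;R)\bigr)^{\otimes n}$. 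Since $Y$ has cochains in nonnegative degrees there are no degree-$(-1)$ cochains, so $\H^0_{\Cyc}(Y;R)=\ker(d_{\Cyc}|_M)$ and $\H^0_{\Bar}(Y;R)=\ker(d_{\Bar}|_M)$.

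The key structural point is that letter-braiding provides the right-hand vertical as the prequel's isomorphism $\mathcal{L}\colon\H^0_{\Bar}(Y;R)\xrightarrow{\sim}\varinjlim_n\Hom_R(R[\pi_1(Y)]/I^n,R)$ onto the finite-type functions, and that the isomorphism of Theorem \ref{thm-main} providing the left-hand vertical is nothing but the restriction of this same $\mathcal{L}$ to the submodule $\H^0_{\Cyc}(Y;R)$, corestricted to the class functions. The inclusion $\H^0_{\Cyc}(Y;R)\subseteq\H^0_{\Bar}(Y;R)$ here is the top map of the square: the augmentation $\alpha\colon\Cyc_{\Delta}(Y;R)\to\Bar_{\Delta}(Y;R)$ is the identity on $M$ in degree $0$ and a chain map, so it sends $d_{\Cyc}$-cocycles to $d_{\Bar}$-cocycles, i.e. $\ker(d_{\Cyc}|_M)\subseteq\ker(d_{\Bar}|_M)$, and because $\mathcal{L}$ is computed by the single formula \eqref{eq:letter braiding} on the underlying cochain, both ways around the square agree with $\mathcal{L}|_{\H^0_{\Cyc}(Y;R)}$. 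Thus the square commutes, and all four maps are injective: the verticals by the cited isomorphisms, the bottom as the inclusion of conjugation-invariant functions, and the top as an inclusion of cocycle submodules of $M$.

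It remains to carry out (iii). By the prequel \cite{gadish_letter-braiding_2023} the image of $\H^0_{\Bar}(Y;R)$ is the submodule of finite-type functions $\varinjlim_n\Hom_R(R[\pi_1(Y)]/I^n,R)$, while the image of $\operatorname{Func}(\pi_1(Y),R)^{\pi_1(Y)}$ is the submodule of class functions; their intersection is the module of conjugation-invariant finite-type functions. On the other hand Theorem \ref{thm-main} identifies the image of $\H^0_{\Cyc}(Y;R)$ with $\varinjlim_n\Hom_R(R[\pi_1(Y)]/I^n,R)^{\pi_1(Y)}$. Because the transition maps, dual to the surjections $R[\pi_1(Y)]/I^{n+1}\onto R[\pi_1(Y)]/I^n$, are injective, this colimit is an increasing union of $\pi_1(Y)$-stable submodules, and for such a union the formation of invariants commutes with the union; hence it too is the module of conjugation-invariant finite-type functions. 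The two descriptions of the image of $\H^0_{\Cyc}(Y;R)$ agree, so this image equals the intersection of the two legs, and the square is a pullback.

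The only substantive input is the surjectivity contained in Theorem \ref{thm-main} --- that every conjugation-invariant finite-type function is genuinely $\mathcal{L}(x)$ for some $d_{\Cyc}$-cocycle $x$. Were one to prove Theorem \ref{thm:main pullback} independently of Theorem \ref{thm-main}, this lifting is where the work would concentrate, and I expect it to be the main obstacle. The governing mechanism is already visible from the chain map $\alpha$: writing $\alpha_1$ for its degree-$1$ component, the chain-map identity gives $\alpha_1\circ d_{\Cyc}=d_{\Bar}$ on $M$, so for a $d_{\Bar}$-closed cochain $x$ the element $d_{\Cyc}x$ lies in $\ker\alpha_1$ and is essentially the single ``wrap-around'' Hochschild face absent from the Bar differential. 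Its vanishing --- the condition that distinguishes $\H^0_{\Cyc}(Y;R)$ inside $\H^0_{\Bar}(Y;R)$ --- translates through the letter-braiding formula \eqref{eq:letter braiding} into invariance of $\mathcal{L}(x)$ under cyclic rotation of words, that is, under conjugation in $\pi_1(Y)$. Granting Theorem \ref{thm-main} this analysis is subsumed, and the diagram chase above completes the proof.
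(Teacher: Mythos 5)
Your argument is circular relative to the logical structure of the paper: the substantive step (iii), where you identify the image of $\H^0_{\Cyc}(Y;R)$ in $\operatorname{Func}(\pi_1(Y),R)$ with the conjugation-invariant finite-type functions, is obtained by citing Theorem \ref{thm-main} --- but in this paper Theorem \ref{thm-main} is itself \emph{deduced from} Theorem \ref{thm:main pullback} (its proof consists of one line: apply Theorem \ref{thm:main pullback} together with the prequel's identification of $\H^0_{\Bar}(Y;R)$ with finite-type functions). You acknowledge this yourself when you write that the surjectivity contained in Theorem \ref{thm-main} is ``the only substantive input'' and that proving the pullback independently of it ``is where the work would concentrate.'' That concession is exactly right, and it means your proposal defers rather than supplies the content of the theorem. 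The formal scaffolding you set up --- reduction to a reduced model, injectivity of the four maps, commutativity via Proposition \ref{prop:cyclic invariants through linear invariants}, pullback $=$ intersection of images, and the observation that invariants commute with the increasing union $\varinjlim_n\Hom_R(R[\pi_1(Y)]/I^n,R)$ --- is all correct but is not where the difficulty lies.

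What the paper actually does to close this gap is in two steps, neither of which appears in your write-up beyond the heuristic in your final paragraph. First, Proposition \ref{prop:pullback square connected} identifies $\H^0(\Cyc(A))$, for a connected algebra, with the $\sigma$-invariant cocycles $\H^0(\Bar(A))\cap\Bar^0(A)^{\sigma}$, using the long exact sequence of Lemma \ref{lem:comparing cyclic and bar} whose connecting map is $\iota\circ(\sigma-1)$; this makes precise your remark that the ``wrap-around'' face of $d_{\Cyc}$ is the cycle operator. Second --- and this is the real work --- Lemma \ref{lem:cycle vs conjugation} proves by a Fox-calculus computation on the free group (the identity $I_T(ws)=I_{\sigma T}(sw)+I_{T-\sigma T}(w)$ and its converse) that a tensor $T$ is cycle-invariant if and only if its letter-braiding function on $F_S$ is a class function. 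The proof then assembles these into a commutative cube over the $1$-skeleton $Y_S=\bigvee_{s\in S}S^1$ and chases: a Bar class whose invariant is a class function on $\pi_1(Y)$ restricts to a class function on $F_S$, hence lifts to $\H^0_{\Cyc}(Y_S;R)$ by the free-group case, hence lifts to $\H^0_{\Cyc}(Y;R)$ by the connected-algebra pullback. To repair your proposal you would need to carry out the cyclic-rotation-versus-conjugation equivalence you gesture at in the last paragraph; as written, that paragraph asserts the translation without proving either direction, and the converse direction (conjugation-invariance of $I_T$ forces $\sigma T=T$) in particular requires the expansion of $(s_1-1)\cdots(s_k-1)$ into group elements and the uniqueness statement for letter-braiding functions.
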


It is furthermore possible to determine whether a letter braiding invariant is a class function using the \emph{cycle operator}, which Connes used to define his cyclic homology complex.
    \begin{theorem}\label{thm:topological characterization}
        For any $R$-module $V$ let $\sigma\maps T(V)\to T(V)$ be the natural \emph{cycle} automorphism of the tensor algebra that cycles tensors once to the right (with a sign given in \ref{def:cycle operator}):
        \[
        \sigma(v_1\otimes v_2\otimes  \ldots v_n) = \pm v_n\otimes v_1 \otimes \ldots \otimes v_{n-1}.
        \]

    Then for a pointed connected simplicial set $Y$ and a set of group generators $S\subseteq \pi_1(Y)$, the module $\H^0_{\Cyc}(Y;R)$ is identified with the collection of noncommutative polynomials in variables $h\in R^S$ that are
    \begin{itemize}
        \item cycle-invariant, and
        \item the evaluation on words $w\in F_S$ according to Equation \eqref{eq:letter braiding} descends to a well-defined function on $\pi_1(Y)$.
    \end{itemize}
    In other words, there is a pullback diagram of injections
\begin{equation}
\begin{split}
    \xymatrix{
    \H^0_{\Cyc}(Y;R) \ar[r] \ar[d] & \left(\bigoplus_{p\geq 0} (R^S)^{\otimes p} \right)^\sigma \ar[d] \\
    \operatorname{Func}(\pi_1(Y),R) \ar[r] & \operatorname{Func}(F_S,R).
    }
\end{split}    
\end{equation}
\end{theorem}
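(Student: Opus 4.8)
The plan is to reduce to the case of a wedge of circles by naturality, carry out an explicit cochain computation there in which Connes' cyclic operator becomes the cycle operator $\sigma$, and then assemble the general statement out of this computation together with Theorem \ref{thm:main pullback} and the prequel's description of $\H^0_{\Bar}$. First I would fix a simplicial map $f\colon W\to Y$ from a wedge of $S$-many simplicial circles $W=\bigvee_S S^1$ realizing the chosen surjection $F_S\onto\pi_1(Y)$. Using the minimal (normalized) model $A=C^*_{\Delta}(W;R)$, which has $A^0=R$, $A^1=R^S$ and $A^{\ge2}=0$ with vanishing differential and vanishing products in positive degree, I would identify the cochain groups of $\Cyc(A)$ explicitly. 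Since $\bar A=A^1$ is concentrated in one degree, the degree-$0$ cochains are exactly $\bigoplus_p 1\otimes(R^S)^{\otimes p}=T(R^S)$, there are no cochains in negative degree (so $\H^0_{\Cyc}(W;R)=\ker d^0_{\Cyc}$), and the degree-$1$ cochains are $\bigoplus_m(R^S)^{\otimes m}$ carried by tensors whose distinguished factor lies in $A^1$.

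Next I would compute $d^0_{\Cyc}$ on $1\otimes[a_1|\cdots|a_p]$. Every internal product $a_ia_{i+1}$ with $1\le i\le p-1$ lands in $A^2=0$ and the internal differential vanishes, so the only surviving contributions are the merge of the unit $a_0=1$ with $a_1$, giving $\pm a_1\otimes[a_2|\cdots|a_p]$, and the cyclic wrap-around term $\pm a_pa_0\otimes[a_1|\cdots|a_{p-1}]=\pm a_p\otimes[a_1|\cdots|a_{p-1}]$. Under the identification of degree-$1$ cochains with $(R^S)^{\otimes p}$ these are precisely the identity tensor and its once-rotated image, so with the sign convention of \ref{def:cycle operator} one gets $d^0_{\Cyc}(x)=\pm(x-\sigma x)$. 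Hence $\H^0_{\Cyc}(W;R)=\ker(\id-\sigma)=\bigl(\bigoplus_{p\ge0}(R^S)^{\otimes p}\bigr)^\sigma$, and under \eqref{eq:identification of cyc and bar} this is exactly the $\sigma$-invariant submodule of $\H^0_{\Bar}(W;R)=T(R^S)$. I expect the sign bookkeeping in this step, together with the careful treatment of degenerate (unit-bearing) terms via the normalized complex, to be the main technical obstacle; it is also the conceptual heart, since this is where the cyclic structure literally produces $\sigma$.

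Finally I would assemble the pullback. Combining the computation above with Theorem \ref{thm:main pullback} for $Y=W$ and the prequel identification $\H^0_{\Bar}(W;R)\into\operatorname{Func}(F_S,R)$ yields the free-group dictionary: a noncommutative polynomial $\phi$ is $\sigma$-invariant if and only if its letter-braiding function $I_\phi$ is a class function on $F_S$. For general $Y$, the map $f^*$ sends $\H^0_{\Cyc}(Y;R)$ into $\bigl(\bigoplus_p(R^S)^{\otimes p}\bigr)^\sigma$ and is compatible, through letter-braiding, with the injection $\operatorname{Func}(\pi_1(Y),R)\into\operatorname{Func}(F_S,R)$ given by pullback along $F_S\onto\pi_1(Y)$; this produces the commuting square, with all maps injective because letter-braiding is faithful. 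To see the square is Cartesian I would take a $\sigma$-invariant $\phi$ whose $I_\phi$ descends to a function $g$ on $\pi_1(Y)$: the free-group dictionary makes $I_\phi$, hence $g$, a class function, and $g$ is of finite type because the augmentation-ideal filtration is preserved by $R[F_S]\onto R[\pi_1(Y)]$. Theorem \ref{thm:main pullback} (equivalently Theorem \ref{thm-main}) then places $g$ in $\H^0_{\Cyc}(Y;R)$, and faithfulness of letter-braiding forces its restriction to $W$ to be $\phi$. The reverse containment is immediate from commutativity, so the square is the asserted pullback.
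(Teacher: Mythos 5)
Your computation for the wedge of circles is correct and matches Example \ref{ex:wedge of circles}: for the square-zero cochain algebra the degree-zero cyclic differential becomes $\pm(1-\sigma)$ under the identification of Observation \ref{obs:bar and cyc(a-bar) are the same}, so $\H^0_{\Cyc}(W;R)=T(R^S)^\sigma$. The overall shape of your argument (reduce to a one-vertex model, identify $\H^0_{\Cyc}$ with cycle-invariant tensors, use the prequel to decide which tensors descend) is also the paper's strategy. But the final assembly has a genuine gap. To invoke the pullback square of Theorem \ref{thm:main pullback} you must first produce a class $T\in\H^0_{\Bar}(Y;R)$ whose letter-braiding invariant is $g$; all you have is a $\sigma$-invariant $\phi\in T(R^S)$ whose invariant descends to $\pi_1(Y)$. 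The statement that such a $\phi$ lifts along the restriction $\H^0_{\Bar}(Y;R)\to\H^0_{\Bar}(W;R)$ is exactly the prequel's Theorem 6.3, and that theorem concerns restriction to the \emph{1-skeleton}: one must replace $Y$ by a weakly equivalent simplicial set whose $0$-skeleton is the basepoint and whose nondegenerate $1$-simplices realize $S$, so that $W$ literally is the $1$-skeleton. A mere simplicial map $f\colon W\to Y$ realizing $F_S\onto\pi_1(Y)$ does not give you this. This lifting step is the centerpiece of the paper's proof and is missing from yours; your fallback ``equivalently Theorem \ref{thm-main}'' does not repair it, since Theorem \ref{thm-main} assumes $\pi_1(Y)$ finitely generated, a hypothesis absent from Theorem \ref{thm:topological characterization}.

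Second, your route inverts the paper's logical order in a way that conceals the real content. You derive the free-group dictionary ($\sigma$-invariance of $\phi$ is equivalent to $I_\phi$ being a class function on $F_S$) from Theorem \ref{thm:main pullback} applied to $W$; in the paper that dictionary is Lemma \ref{lem:cycle vs conjugation}, proved by a direct Fox-calculus computation, and it is an \emph{input} to the proof of Theorem \ref{thm:main pullback} via Corollary \ref{cor:pullback for free grps}. The dependency is not literally circular, because the paper's proof of Theorem \ref{thm:main pullback} never uses the statement of Theorem \ref{thm:topological characterization}, but your proof cannot stand alone without either proving Lemma \ref{lem:cycle vs conjugation} directly or checking that Theorem \ref{thm:main pullback} is available independently. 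Note also that the paper's mechanism for general $Y$ is more direct than yours: since the one-vertex model $X$ has connected cochain algebra, Proposition \ref{prop:pullback square connected} applies to $C^*(X)$ itself and identifies $\H^0_{\Cyc}(X;R)$ with the intersection of $\H^0_{\Bar}(X;R)$ and $\Bar^0(X)^\sigma$ inside $T(\desus R^S)$ in one step, with no need to pass through $\H^0_{\Cyc}$ of the wedge at all.
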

Combinatorially, the last theorem states that a letter-braiding invariant is a class function if and only if it does not change under cyclic rotation of the functionals
\[ [h_1|\ldots|h_n] \longmapsto [h_n|h_1|\ldots|h_{n-1}].\]

\subsection*{Acknowledgements} This project grew out of many fruitful conversations with Greg Friedman, Robin Koytcheff, Dev Sinha, and Ben Walter. I thank Dev Sinha and Greg Friedman for their thoughtful feedback on an early draft.

\section{Associative algebras}
Let $R$ be a ground PID and consider $(A,d)$ an associative differential graded (dg-) $R$-algebra, equipped with augmentation
\[
0 \to \bar{A} \to A \xrightarrow{\eta} R \to 0.
\]
We call $\bar{A}$ the augmentation ideal. The following terminology is standard.
\begin{definition}
$A$ is said to be \emph{connected} if the augmentation induces and isomorphism $A^0\cong R$, or equivalently if $\bar{A}$ is concentrated in strictly positive degree. More generally, $A$ is said to be \emph{homologically connected} if it is quasi-isomorphic to a connected dg algebra.
\end{definition}

\begin{definition}[Cyclic Bar construction]\label{def:cyclic bar}
    Let $(M,d_M)$ be a differential graded $(A-A)$-bimodule, meaning that $d_M$ satisfies the (graded) Leibniz rule with respect to multiplication by $A$ on either side. The cyclic Bar complex of $A$ with coefficients in $M$ is the graded module
    \[
    \Cyc(A;M) := M\otimes T(\desus \bar{A})
    \]
    with tensor notation abbreviated to
    \[
    m_0[a_1|\ldots|a_n] \sim m_0\otimes \desus a_1 \otimes \ldots \otimes \desus a_n,
    \]
    and differential $d (m_0[a_1|\ldots|a_n])= d_M(m_0)[a_1|\ldots|a_n] - \sum_{i=1}^n (-1)^{\epsilon_{i-1}}m_0[a_1|\ldots|da_i|\ldots|a_n]$
    \begin{eqnarray*}
     - (-1)^{|m_0|} & m_0  a_1 [a_2|\ldots | a_n]&  -
     \sum_{i=1}^{n-1}(-1)^{\epsilon_i} m_0[a_1|\ldots|a_i a_{i+1}|\ldots |a_n] \\+ (-1)^{\epsilon_{n-1}(|a_n|-1)} & a_nm_0[a_1|\ldots|a_{n-1}]
    \end{eqnarray*}
    where $\epsilon_i = |m_0|+|a_1|+\ldots+|a_i|-i$ (we follow the sign conventions of \cite[Section 2]{getzler_differential_1991}).

    In the special case $M=A$ we denote $\Cyc(A;A)$ simply by $\Cyc(A)$.
\end{definition}

\begin{example}[Linear Bar construction]\label{obs:bar is cyclic}
    The augmentation $\eta\maps A\to R$ endows $R$ with a trivial $A$-bimodule structure, and the cyclic complex $\Cyc(A;R)$ is naturally isomorphic to the classical (linear) bar construction  $\Bar(A)$. This may be taken as a definition:
    \[
    \Cyc(A;R) = R\otimes T(\desus\bar{A}) \cong T(\desus \bar{A}) =: \Bar(A)
    \]
    consists of sums of elements $[a_0|\ldots|a_n]$, with differential $d_{\Bar}$ consisting only of those terms of $d_{\Cyc}$ that don't operate on $m_0$. This is because the terms in which $\bar{A}$ is multiplied with the module $M=R$ through the augmentation automatically vanish.
\end{example}

\begin{observation}\label{obs:bar and cyc(a-bar) are the same}
    The cyclic complex $\Cyc(A;\bar{A}) = \bar{A}\otimes T(\desus \bar{A})$ is additively isomorphic to the positive degree part of the tensor algebra $T^{>0}(\bar{A})$. Accordinely, there is a natural additive isomorphism of degree $+1$
    \[
    \iota\maps \overline{\Bar(A)} \to \Cyc(A;\bar{A}) \quad\text{ given by }\quad [a_1|\ldots|a_n] \longmapsto a_1[a_2|\ldots|a_n].
    \]
    This will play a role in our analysis below.
\end{observation}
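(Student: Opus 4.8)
The plan is to reduce both assertions to the single fact that the suspension $\sus\maps\desus\bar{A}\to\bar{A}$ is an isomorphism of graded $R$-modules of degree $+1$; once differentials are ignored, everything becomes a matter of resuspending tensor factors and reindexing.

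First I would establish the additive isomorphism with $T^{>0}(\bar{A})$ by splitting both modules according to tensor-word length. By definition $\Cyc(A;\bar{A}) = \bar{A}\otimes T(\desus\bar{A}) = \bigoplus_{n\geq 0}\bar{A}\otimes(\desus\bar{A})^{\otimes n}$, whereas $T^{>0}(\bar{A}) = \bigoplus_{m\geq 1}\bar{A}^{\otimes m}$. On the $n$-th summand I apply $\id_{\bar{A}}\otimes\sus^{\otimes n}$ to identify $\bar{A}\otimes(\desus\bar{A})^{\otimes n}$ with $\bar{A}\otimes\bar{A}^{\otimes n}=\bar{A}^{\otimes(n+1)}$; setting $m=n+1$ then matches the summands of $T^{>0}(\bar{A})$. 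Since each $\sus$ is bijective, these are isomorphisms of the underlying graded modules, giving the first claim.

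Next I would recognize $\iota$ as suspension applied to a single factor. Writing $\ol{\Bar(A)} = T^{>0}(\desus\bar{A}) = \bigoplus_{n\geq 1}(\desus\bar{A})^{\otimes n}$, the assignment $[a_1|\ldots|a_n]\longmapsto a_1[a_2|\ldots|a_n]$ is precisely $\sus\otimes\id^{\otimes(n-1)}$ on the $n$-th summand: it promotes the leftmost $\desus a_1$ back to $a_1\in\bar{A}$ and leaves the remaining bar letters untouched. Because $\sus$ is a graded isomorphism, $\iota$ is a graded isomorphism summand by summand, hence overall. The degree bookkeeping is immediate: the source $[a_1|\ldots|a_n]$ has degree $\sum_i(|a_i|-1)$ while the target $a_1[a_2|\ldots|a_n]$ has degree $|a_1|+\sum_{i\geq 2}(|a_i|-1)$, a difference of exactly $+1$ coming from the single suspension of the leftmost factor. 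Naturality in $A$ is automatic, since $\iota$ is assembled only from the structural maps $\sus$ and $\id$ together with the functorial assignment $A\mapsto\bar{A}$.

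There is no genuine obstacle here; the one point I would flag carefully is that $\iota$ is claimed to be an \emph{additive} isomorphism only, i.e.\ an isomorphism of underlying graded $R$-modules and not of complexes. Indeed the cyclic differential on $\Cyc(A;\bar{A})$ contains the bimodule-multiplication terms $m_0a_1$ and $a_nm_0$ of $d_{\Cyc}$, which have no counterpart in $d_{\Bar}$, so $\iota$ cannot intertwine the differentials. I would therefore state the result purely at the level of graded modules and postpone any comparison of differentials to the later sections, where precisely this discrepancy is what the analysis exploits.
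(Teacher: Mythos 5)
Your proof is correct and matches the paper's (implicit) reasoning: the Observation is stated without proof precisely because it amounts to the length-wise decomposition and the degree-$+1$ suspension isomorphism $\sus\maps\desus\bar{A}\to\bar{A}$ applied factor by factor, exactly as you describe. Your closing caveat --- that $\iota$ is only an isomorphism of graded modules and not of complexes, since $d_{\Cyc}$ on $\Cyc(A;\bar{A})$ carries the bimodule-multiplication terms --- is also the right thing to flag and is consistent with how the paper later uses $\iota$ in Lemma \ref{lem:comparing cyclic and bar}.
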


Crucially, the bi-functor $\Cyc(-;-)$ is quasi-isomorphism invariant, in the following sense.
\begin{prop}
    Let $A$ and $A'$ be augmented dg-algebras that are $R$-torsion free, and let $M$ and $M'$ be bounded-below bimodules over the respective algebras. If $h\maps A\to A'$ is a quasi-isomorphism of dg-algebras and $g\maps M\to M'$ is a quasi-isomorphism of bimodules, compatible with $h$, then the induced map
    \[
    g\otimes T(h) \maps \Cyc(A;M) \to \Cyc(A';M')
    \]
    is a quasi-isomorphism. In particular, the induced map $\H^i(\Cyc(A;M))\to \H^i(\Cyc(A';M'))$ is an isomorphism for all $i$.
\end{prop}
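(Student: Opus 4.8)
The plan is to filter the cyclic complex by tensor word-length and thereby reduce to a tensor-product statement on the associated graded, where the torsion-free hypothesis does the real work via the Künneth theorem.

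I would begin by filtering $\Cyc(A;M)=\bigoplus_{n\geq 0} M\otimes(\desus\bar A)^{\otimes n}$ by the increasing, exhaustive filtration $F_p=\bigoplus_{n\leq p}M\otimes(\desus\bar A)^{\otimes n}$ that records the number of bar factors. Reading off the differential of Definition~\ref{def:cyclic bar}, the terms arising from $d_M(m_0)$ and from $d a_i$ preserve the word-length $n$, whereas every remaining term---the two boundary multiplications $m_0a_1$ and $a_nm_0$, together with the internal multiplications $a_ia_{i+1}$---lowers $n$ by exactly one. Consequently each $F_p$ is a subcomplex, with $F_{-1}=0$ and $F_0=(M,d_M)$, and the map $g\otimes T(h)$ is filtered since $T(h)=\bigoplus_n(\desus\bar h)^{\otimes n}$ preserves word-length (here $\bar h\maps\bar A\to\bar A'$ is the restriction of $h$ to augmentation ideals; the compatibility of $g$ with $h$ is exactly what makes $g\otimes T(h)$ a chain map). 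The associated graded piece $\mathrm{gr}_p=F_p/F_{p-1}$ is $M\otimes(\desus\bar A)^{\otimes p}$ carrying only the internal differential, and the induced map on it is $g\otimes(\desus\bar h)^{\otimes p}$.

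The crux is to prove this map is a quasi-isomorphism on each $\mathrm{gr}_p$. Applying the five lemma to the augmentation sequences shows that $h$ being a quasi-isomorphism forces $\bar h$, hence $\desus\bar h$, to be one as well. Now $M,M',\bar A,\bar A'$ are $R$-torsion free over a PID, hence flat, and bounded below, so the naive tensor product computes the derived tensor product and the Künneth theorem yields $\H^*(\mathrm{gr}_p)\cong\H^*(M)\otimes\H^*(\desus\bar A)^{\otimes p}$ with no $\Tor$-correction. It follows that a tensor product of quasi-isomorphisms between such complexes is again a quasi-isomorphism, so $g\otimes(\desus\bar h)^{\otimes p}$ is a quasi-isomorphism for every $p$. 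I expect this to be the main obstacle: both the flatness coming from torsion-freeness and the bounded-below hypothesis are indispensable here, since for general complexes a tensor product of quasi-isomorphisms need not remain one.

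Finally, I would propagate the conclusion up the filtration. The short exact sequences $0\to F_{p-1}\to F_p\to\mathrm{gr}_p\to 0$ induce long exact sequences in cohomology, and an induction on $p$---with base case $F_0=(M,d_M)$, on which the map is simply $g$---applies the five lemma to conclude that $g\otimes T(h)$ restricts to a quasi-isomorphism $F_p\to F_p'$ for all $p$. Since $\Cyc(A;M)=\varinjlim_p F_p$ and cohomology commutes with filtered colimits in $R$-modules, $g\otimes T(h)$ is a quasi-isomorphism on the whole complex. Routing the argument through this colimit is precisely what lets me sidestep any convergence question for the unbounded word-length filtration.
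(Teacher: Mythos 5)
Your proof is correct and follows essentially the same route as the paper: filter $\Cyc(A;M)$ by tensor word-length and apply the K\"unneth theorem (using torsion-freeness over the PID $R$) to see that the induced map on the associated graded pieces is a quasi-isomorphism. The only difference is cosmetic --- where the paper invokes convergence of the resulting spectral sequence, you unpack that step into an induction up the filtration via the five lemma followed by a filtered colimit, which is the same argument made explicit.
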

\begin{proof}
    Use the standard spectral sequence argument: the complexes are filtered by tensor degree, with associated graded breaking up as a direct sum of tensor product complexes 
    \[
    E_0 \cong \bigoplus_{p\geq 0} (M\otimes (\desus \bar{A})^{\otimes p}, d).\]
    The K\"unneth isomorphism applies, since $\desus\bar{A}$ are $R$-torsion free and $R$ is a PID, and we get that the $E_1$-page is naturally isomorphic to $\Cyc(H(A);H(M))$. The same holds for $(A',M')$, so since the maps of complexes $(A,M)\to (A',M')$ induce isomorphisms on cohomology, they already induce an isomorphism on the $E_1$-pages of the spectral sequence. These spectral sequences converge since $\Cyc(A;M)$ is bounded-below, so a map inducing an isomorphism on $E_1$ is a quasi-isomorphism.
\end{proof}

    We will use Connes' cyclic rotation operator on the tensor algebra. 
    \begin{definition}\label{def:cycle operator}
    Let $V$ be any graded $R$-module. Then the tensor algebra $T(V)$ has a natural automorphism
    \[
    \sigma\maps (v_1\otimes \ldots\otimes v_n) = (-1)^{ (|v_1|+\ldots+|v_{n-1}|)\cdot |v_n|}v_n\otimes v_1\otimes \ldots \otimes v_{n-1}.
    \]
    We call this the \emph{cycle} operator.

    In particular, $\sigma$ is defined on the linear Bar construction $\Bar(A) = T(\desus \bar{A})$.
    \end{definition}

    \begin{observation}
        The cycle operator on $\Bar(A)$ is \emph{not} an automorphism of the chain complex, as it does not commute with the Bar differential.
        However, the submodule $\Bar(A)^{\sigma}$ of $\sigma$-invariant tensors happens to be preserved by the Bar differential and is thus a subcomplex. The reader should keep in mind that, while $\Bar(A)$ is a quasi-isomorphism invariant of $A$, the subcomplex $\Bar(A)^{\sigma}$ is not, and may even exhibit nontrivial cohomology in strictly negative cohomological degrees.
    \end{observation}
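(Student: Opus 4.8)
The statement bundles three claims: that $\sigma$ fails to commute with the Bar differential $d_{\Bar}$, that nonetheless $d_{\Bar}$ preserves the fixed submodule $\Bar(A)^{\sigma} := \ker(1-\sigma)$, and the cautionary remark that $\Bar(A)^{\sigma}$ is not a quasi-isomorphism invariant and can carry negative cohomology. The plan is to isolate first the purely combinatorial mechanism and then address the three items in turn. Write $d_{\Bar} = d_{\mathrm{int}} + b'$, where $d_{\mathrm{int}}$ applies the internal differential $d$ factor-by-factor and $b'[a_1|\ldots|a_n] = -\sum_{i=1}^{n-1}(-1)^{\epsilon_i}[a_1|\ldots|a_i a_{i+1}|\ldots|a_n]$ is the multiplication part. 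The decisive feature, inherited from Example \ref{obs:bar is cyclic}, is that $b'$ multiplies only the adjacent pairs in positions $1,\ldots,n-1$ and omits the ``wrap-around'' product of the last factor with the first --- precisely the term the trivial module $R$ annihilates through the augmentation. I would introduce a bookkeeping operator $b := b' + \mu$, where $\mu$ adjoins this missing wrap-around multiplication together with its Koszul sign. I stress that $b$ is not asserted to be a differential on $\Bar(A)$; it serves only to organize the cyclic symmetry.

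For the preservation claim (ii) I would establish the classical cyclic-homology commutation identity, transported into the shifted conventions of Definition \ref{def:cyclic bar} and Definition \ref{def:cycle operator}, namely
\[
(1-\sigma)\,b' = b\,(1-\sigma),
\]
together with the fact that $d_{\mathrm{int}}$ commutes with $\sigma$ (a Koszul-sign check, since $\sigma$ carries exactly the sign that renders the coordinatewise differential equivariant). Granting these, if $x \in \Bar(A)^{\sigma}$ then $(1-\sigma)b'x = b(1-\sigma)x = 0$, so $b'x$ is again $\sigma$-invariant, while $d_{\mathrm{int}}x$ is $\sigma$-invariant because $d_{\mathrm{int}}$ commutes with $\sigma$; hence $d_{\Bar}$ maps $\Bar(A)^{\sigma}$ into itself. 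The same operator $b\neq b'$ records the failure of commutation in (i): the discrepancy $\sigma b' - b'\sigma$ is the wrap-around term $\mu$ up to sign, and it is visibly nonzero. Concretely, on a three-letter tensor one computes (signs suppressed) $\sigma b'[a|b|c] - b'\sigma[a|b|c] = [bc|a]-[ca|b]$, which is nonzero for noncommutative $A$, so $\sigma$ is not a chain map. The main labour --- and the only genuine obstacle --- is verifying the identity $(1-\sigma)b' = b(1-\sigma)$ with the signs in force: the combinatorics of matching terms is transparent, since each wrap-around product missing from $b'\sigma$ is supplied by a neighbouring term of the $\sigma$-orbit, but the Koszul signs demand careful tracking.

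Finally, for the advisory remark (iii) I would argue conceptually and then exhibit witnesses. The assignment $V \mapsto V^{\sigma} = \ker(1-\sigma)$ is only left exact, not exact: on $(\desus\bar{A})^{\otimes n}$ it records invariants of the cyclic group $\Z/n$, and over a general PID this detects Tate-type information invisible to the underlying complex. Consequently a quasi-isomorphism $A \xrightarrow{\sim} A'$, which induces a map $\Bar(A)\to\Bar(A')$ that does commute with $\sigma$ by naturality of the cycle operator on $T(-)$, still need not restrict to a quasi-isomorphism on the fixed subcomplexes; I would exhibit this by comparing a connected model with a non-connected acyclic enlargement, for instance tensoring with $E = R\langle u,v\rangle$ where $|u|=0$, $|v|=-1$, $dv=u$ and all bar-products vanish, so that $E \simeq R$ yet $\H^{*}(\Bar(-)^{\sigma})$ is altered. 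The appearance of negative cohomology is already visible for the dual numbers $A = R[x]/(x^2)$ with $x$ in degree $0$: then $\desus x$ has degree $-1$, the differential vanishes, and $\sigma$ acts on $(\desus x)^{\otimes n}$ by the scalar $(-1)^{n-1}$, so that $\Bar(A)^{\sigma} = \bigoplus_{n\ \mathrm{odd}} R\,(\desus x)^{\otimes n}$ has cohomology concentrated in the strictly negative degrees $-1,-3,-5,\ldots$.
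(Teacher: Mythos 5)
The paper offers no argument for this Observation at all --- it is asserted and then used --- so your write-up supplies a proof where the paper has none, and the mechanism you identify is the right one. Splitting $d_{\Bar}=d_{\mathrm{int}}+b'$ and invoking the Connes--Tsygan relation $(1-\sigma)b' = b(1-\sigma)$ (with $b$ the operator completed by the wrap-around face) is exactly the classical reason the invariant tensors form a subcomplex while $\sigma$ itself is not a chain map; the deduction ``$x\in\ker(1-\sigma)\Rightarrow (1-\sigma)b'x=b(1-\sigma)x=0$'' is valid, and $d_{\mathrm{int}}$ commutes with $\sigma$ since the Koszul sign in Definition \ref{def:cycle operator} is precisely the one making cyclic permutation a chain map for the tensor-product differential. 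As a sanity check on the signs you defer: when all $|a_i|=0$ the shifted degrees are $-1$, your $\sigma$ reduces to $(-1)^{n-1}$ times the cyclic permutation and the faces acquire alternating signs, recovering the ungraded identity of Loday--Connes, so the graded verification is routine.

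The one place to tighten is item (iii). Your dual-numbers example $A=R[x]/(x^2)$ with $|x|=0$ does literally produce cohomology of $\Bar(A)^{\sigma}$ in negative degrees, but for that $A$ the full complex $\Bar(A)$ already has cohomology in every negative degree ($A$ is not homologically connected), so the example does not exhibit the contrast the Observation is warning about: for homologically connected $A$ (e.g.\ cochains on a connected simplicial set with many vertices), quasi-isomorphism invariance forces $\H^{<0}(\Bar(A))=0$, and the point is that $\H^{<0}(\Bar(A)^{\sigma})$ can nonetheless be nonzero. Relatedly, your proposed witness for non-invariance (``tensoring with $E$ where all bar-products vanish'') is not pinned down to a well-defined augmented dg algebra. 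A cleaner witness living in the homologically connected world: take the square-zero extension $A=R\oplus Ru\oplus Rv$ with $|u|=0$, $|v|=1$, $du=v$, so $A\simeq R$ and $\Bar(R)^{\sigma}=R$; yet in tensor weight $2$ the $\sigma$-invariants of $\Bar(A)$ form the two-term complex $R\xrightarrow{\pm 2}R$ in degrees $-1,0$, contributing $R/2R$ to $\H^0(\Bar(A)^{\sigma})$ --- already a failure of quasi-isomorphism invariance over $R=\Z$ --- with torsion of this kind producing classes in negative degrees in higher weights or over $R$ with $2$-torsion. (Two small slips: the empty tensor contributes an $R$ in degree $0$ to your $\Bar(A)^{\sigma}$, and $(\desus x)^{\otimes n}$ for $n$ even survives into $\ker(1-\sigma)=\ker(2)$ whenever $R$ has $2$-torsion, so the identification $\Bar(A)^{\sigma}=\bigoplus_{n\ \mathrm{odd}}R(\desus x)^{\otimes n}$ is only correct when $2$ is a nonzerodivisor.)
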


\begin{lemma}\label{lem:comparing cyclic and bar}
    Let $A$ be an augmented homologically connected dg-algebra without $R$-torsion. The augmentation sequence $0\to \bar{A}\to A\to R\to 0$ induces a long exact sequence in cyclic Bar cohomology
    \begin{equation}
        0\to H^0(\Cyc(A)) \to H^0(\Bar(A)) \overset{d}\to H^1(\Cyc(A;\bar{A})\to \H^1(\Cyc(A)) \to \ldots
    \end{equation}
    whose connecting homomorphism
    $d\maps H^i(Bar(A)) \to H^{i+1}(Cyc(A;\bar{A}))$
    is $d = \iota \circ(\sigma-1)$.
\end{lemma}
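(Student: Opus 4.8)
By the quasi-isomorphism invariance proved in the preceding Proposition we may, and do, replace $A$ by a quasi-isomorphic connected dg algebra, so that $\bar{A}$ is concentrated in strictly positive degrees; this changes none of the three cohomology modules in question.

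The plan is to exhibit the stated sequence as the long exact sequence of a short exact sequence of complexes, and then to compute its connecting homomorphism directly on the chain level. First I would note that the augmentation sequence $0 \to \bar{A} \to A \xrightarrow{\eta} R \to 0$ is a short exact sequence of differential graded $A$-bimodules which splits as graded $R$-modules via the unit $1 \mapsto 1_A$ (indeed $\eta(1_A)=1$). Since the functor $\Cyc(A;-) = (-)\otimes T(\desus \bar{A})$ is additive and functorial in the bimodule, applying it produces chain maps fitting into
\[
0 \to \Cyc(A;\bar{A}) \to \Cyc(A) \to \Bar(A) \to 0,
\]
where we have identified $\Cyc(A;R)$ with $\Bar(A)$ via Example \ref{obs:bar is cyclic}; this sequence is still (split) exact on underlying modules, being the original split sequence tensored with $T(\desus \bar{A})$. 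Its long exact sequence in cohomology is the one asserted. Because $\bar{A}$ lives in positive degrees, the left factor of $\Cyc(A;\bar{A}) = \bar{A}\otimes T(\desus \bar{A})$ forces that complex into cohomological degrees $\geq 1$, so $\H^i(\Cyc(A;\bar{A}))=0$ for $i\leq 0$ and the sequence truncates to begin with $0 \to \H^0(\Cyc(A))$.

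It remains to identify the connecting map. Given a $d_{\Bar}$-cocycle, written on a generator as $x=[a_1|\ldots|a_n]$, I would lift it along the splitting to $1_A[a_1|\ldots|a_n]\in\Cyc(A)$ and apply the differential of Definition \ref{def:cyclic bar}. The terms whose module component is $1_A$ are precisely (the lift of) $d_{\Bar}(x)$ and therefore vanish on a cocycle; the only terms with module component in $\bar{A}$ are the two ``wrap-around'' terms
\[
-(-1)^{|1_A|}\,a_1[a_2|\ldots|a_n] \qquad\text{and}\qquad +(-1)^{\epsilon_{n-1}(|a_n|-1)}\,a_n[a_1|\ldots|a_{n-1}],
\]
which already lie in the subcomplex $\Cyc(A;\bar{A})$. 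By definition of the connecting homomorphism, $d[x]$ is represented by their sum.

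Finally I would match this sum with $\iota\circ(\sigma-1)$ using Observation \ref{obs:bar and cyc(a-bar) are the same}: the first term is exactly $-\iota(x)$. For the second, the essential --- and only delicate --- point is that Getzler's sign $\epsilon_{n-1}(|a_n|-1)$, with $\epsilon_{n-1}=\sum_{i<n}(|a_i|-1)=\sum_{i<n}|\desus a_i|$, equals the Koszul sign $(|\desus a_1|+\ldots+|\desus a_{n-1}|)\,|\desus a_n|$ defining $\sigma$ in Definition \ref{def:cycle operator}; hence the second term is $\iota(\sigma x)$. Summing gives $d[x]=[\iota((\sigma-1)x)]$, i.e. $d=\iota\circ(\sigma-1)$. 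The main obstacle is exactly this sign reconciliation; everything else is formal homological algebra, and the fact that the signs align is precisely what the chosen conventions of Getzler and Connes are engineered to achieve.
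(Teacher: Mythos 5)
Your proposal is correct and follows essentially the same route as the paper's own proof: reduce to the connected case by quasi-isomorphism invariance, apply $\Cyc(A;-)$ to the augmentation sequence to get a short exact sequence of complexes, observe that $\Cyc(A;\bar{A})$ vanishes in degree $0$, and compute the connecting map on the chain-level lift $1[a_1|\ldots|a_n]$, where the $d_{\Bar}$ terms die on cocycles and the two wrap-around terms assemble into $\iota\circ(\sigma-1)$ with the sign check you describe. The only difference is that you spell out the graded splitting and the Koszul-sign reconciliation slightly more explicitly than the paper does, which is harmless.
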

\begin{proof}
First, by quasi-isomorphism invariance of the cyclic complexes, we may assume $A$ is a connected algebra, so $\bar{A}$ is concentrated in strictly positive degrees. Consequently, the three complexes $\Cyc(A;M)$ for $M=R,A,$ and $\bar{A}$ are trivial in strictly negative degrees.

The augmentation induces a short exact sequence of cyclic chain complexes
\[
0\to \Cyc(A;\bar{A})\to \Cyc(A;A) \to \Cyc(A;R) \to 0,
\]
which in turn gives a long exact sequence in cohomology
\[
\ldots \to H^i(\Cyc(A))\to H^i(\Cyc(A;R))\xrightarrow{d} H^{i+1}(\Cyc(A;\bar{A}) \to H^{i+1}(\Cyc(A)) \to \ldots.
\]
By Example \ref{obs:bar is cyclic}, the complex $\Cyc(A;R)$ is the same as the linear Bar complex of $A$. Furthermore, the complex $\Cyc(A;\bar{A})$ is concentrated in strictly positive degrees, and thus has vanishing $0$-th cohomology.

It remains to compute the connecting homomorphism. By definition
\begin{align*}
&d \sum_{i=1}^N[a_1|\ldots|a_{n_i}] = d_{\Cyc} \sum_{i=1}^N 1[a_1|\ldots|a_{n_i}] \\ = &-\sum_{i=1}^N a_1[a_2|\ldots|a_{n_i}] + 1\otimes d_{\Bar}[a_1|\ldots|a_{n_i}] + (-1)^{\epsilon_{n_{i}-1}(|a_{n_i}|-1)}a_{n_i}[a_1|\ldots|a_{n_i-1}].
\end{align*}
But since this operator is only applied to cocycles in the Bar complex, the $d_{\Bar}$ term vanishes and we are left with exactly $\iota\circ (\sigma-1)$. The sign on the $\sigma$ term is verified since the degree of every $\desus a_i$ factor in the tensor is $|a_i|-1$.
\end{proof}

\begin{example}\label{ex:wedge of circles}
    Consider the simplicial cochain algebra of a wedge of circles $X_n = \bigvee_{i=1}^n S^1$. This is the square-zero dg algebra $A_n = R[\epsilon_1,\ldots,\epsilon_n]/(\epsilon_i \epsilon_j \mid 1\leq i,j \leq n)$ with trivial differential, and augmentation $\epsilon_i \mapsto 0$ for all $i$.

    Then the corresponding Bar complex is the (co)free (co)associative (co)algebra $$B_n = R\langle e_1,\ldots,e_n \rangle \quad\text{ where }\quad e_i = \desus(\epsilon_i)$$ concentrated in degree $0$ equipped with zero differential and the cycle operator $\sigma$.

    In this case, 
    \[
    \H^*(\Bar(A_n)) = \begin{cases}
        B_n & *=0\\
        0 & *\neq 0
    \end{cases} \quad\text{ and } \quad \H^*(\Cyc(A_n;\bar{A_n})) = \begin{cases}
        \bar{B}_n & *=1 \\
        0 & *\neq 1,
    \end{cases}
    \]
    where $\bar{B}_n$ is the augmentation ideal consisting of polynomials with no constant term.
    
    The long exact sequence in Lemma \ref{lem:comparing cyclic and bar} gives
    \[
    0 \to H^0(\Cyc(A_n)) \to B_n \xrightarrow{\sigma-1} \bar{B}_n \to H^1(\Cyc(A_n)) \to 0 \to \ldots\to 0 \to \H^i(\Cyc(A_n)) \to 0
    \]
    translating to isomorphisms with cycle-invariants and coinvariants
    \[
    H^*(\Cyc(A_n)) \cong \begin{cases}
        (B_n)^{\sigma} & *=0 \\
        (\bar{B}_n)_{\sigma} & *=1\\
        0 & *\geq 2.
    \end{cases}
    \]
\end{example}
\begin{notation}\label{not:infinite wedge}
    Below we consider infinite wedges of circles, so let us set notation analogous to the last example. For any index set $S$ let $X_S := \bigvee_{s\in S} S^1$ be the wedge of $S$-many circles. Its fundamental group is the free group $F_S = \langle S\rangle$, generated by $S$.

    Simplicial chains on $X_S$ have $C^0(X_S) = R\cdot 1$ and $C^1(X_S) = \{ h\maps S\to R \} =: R^S$ with trivial product and differential. Let us denote its Bar construction by $B_S = T(\desus R^S)$, concentrated in degree $0$ and having zero differential. It is spanned by tensors $[h_1|\ldots|h_n]$, as in Equation \eqref{eq:letter braiding}, but note that it is uncountably generated when $S$ is infinite.
\end{notation}
The identification of $\H^0(\Cyc(A))$ with cycle-invariant tensors in fact holds more generally.

    \begin{lemma}
        The natural injection $\tau\maps \Bar(A) \to \Cyc(A)$ given by $[a_1|\ldots|a_n]\mapsto 1[a_1|\ldots|a_n]$ restricts to a chain map on the $\sigma$-invariant subcomplex,
        \[
        \tau\maps (\Bar(A)^\sigma, d_{\Bar}) \to (\Cyc(A),d_{\Cyc}).
        \]
    \end{lemma}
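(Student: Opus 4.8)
The plan is to verify the chain-map identity by a direct computation of $d_{\Cyc}\circ\tau$ on a generator $[a_1|\ldots|a_n]$, using the explicit formula of Definition~\ref{def:cyclic bar} with $m_0=1\in A$, and to show that the discrepancy $d_{\Cyc}\tau-\tau d_{\Bar}$ is precisely the operator $\iota\circ(\sigma-1)$ already identified in Lemma~\ref{lem:comparing cyclic and bar}. Since this operator annihilates $\sigma$-invariant tensors, the restriction of $\tau$ to $\Bar(A)^{\sigma}$ will then be a chain map. I note first that $\tau$ lands where we need it: $\Bar(A)^{\sigma}$ is a subcomplex of $\Bar(A)$ (by the observation following Definition~\ref{def:cycle operator}), so $\tau(d_{\Bar}x)$ is defined for $x\in\Bar(A)^{\sigma}$, and $\tau$ is visibly injective as the inclusion of the summand $R\cdot 1\otimes T(\desus\bar{A})$.

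The computation proceeds as follows. Setting $m_0=1$ (so $|m_0|=0$ and $\epsilon_i=|a_1|+\ldots+|a_i|-i$), the term $d_M(m_0)[a_1|\ldots|a_n]$ vanishes because $d_A(1)=0$. Comparing the remaining terms of $d_{\Cyc}(1[a_1|\ldots|a_n])$ with the description of $d_{\Bar}$ in Example~\ref{obs:bar is cyclic}, the internal-differential terms $-\sum_i(-1)^{\epsilon_{i-1}}1[a_1|\ldots|da_i|\ldots|a_n]$ and the interior-multiplication terms $-\sum_{i=1}^{n-1}(-1)^{\epsilon_i}1[a_1|\ldots|a_ia_{i+1}|\ldots|a_n]$ together reproduce exactly $\tau(d_{\Bar}[a_1|\ldots|a_n])$. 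The only terms of $d_{\Cyc}$ with no counterpart in $d_{\Bar}$ are the two that multiply $\bar{A}$ into the module slot: the left term $-a_1[a_2|\ldots|a_n]$ and the cyclic wrap-around term $+(-1)^{\epsilon_{n-1}(|a_n|-1)}a_n[a_1|\ldots|a_{n-1}]$.

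It remains to recognize these two leftover terms. The left term is $-\iota([a_1|\ldots|a_n])$ by the very definition of $\iota$ in Observation~\ref{obs:bar and cyc(a-bar) are the same}. For the wrap-around term, the key sign check is that the Koszul sign in Definition~\ref{def:cycle operator}, computed on the shifted generators with $|\desus a_i|=|a_i|-1$, is $(-1)^{(|a_1|-1+\ldots+|a_{n-1}|-1)(|a_n|-1)}=(-1)^{\epsilon_{n-1}(|a_n|-1)}$, so that $\sigma([a_1|\ldots|a_n])=(-1)^{\epsilon_{n-1}(|a_n|-1)}[a_n|a_1|\ldots|a_{n-1}]$ and the wrap-around term is exactly $\iota(\sigma[a_1|\ldots|a_n])$. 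Hence $d_{\Cyc}\tau-\tau d_{\Bar}=\iota\circ(\sigma-1)$, which vanishes on $\Bar(A)^{\sigma}$, completing the argument. This sign bookkeeping is the only delicate point, and it has in effect already been carried out in the connecting-homomorphism computation of Lemma~\ref{lem:comparing cyclic and bar}; the present lemma simply records that the same identity holds at the level of chains, before passing to cohomology, and for arbitrary (not necessarily closed) $\sigma$-invariant tensors.
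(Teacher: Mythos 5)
Your proof is correct and follows essentially the same route as the paper: compute $d_{\Cyc}(1[a_1|\ldots|a_n])$, identify the two leftover module-multiplication terms as $\iota(\sigma-1)$ via the relation $d_{\Cyc}\tau=\iota(\sigma-1)+\tau d_{\Bar}$, and conclude that this defect vanishes on $\sigma$-invariant tensors. Your sign check $(|a_1|-1)+\cdots+(|a_{n-1}|-1)=\epsilon_{n-1}$ matches the paper's remark that the degree of each $\desus a_i$ is $|a_i|-1$.
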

    \begin{proof}
        For $n>0$ we compute
        \[
        d_{\Cyc} 1[a_1|\ldots|a_n] = -a_1[a_2|\ldots|a_n] + 1\otimes d_{\Bar}[a_1|\ldots|a_n] + (-1)^{\epsilon_{n-1}\cdot(|a_n|-1)}a_n[a_1|\ldots|a_{n-1}],
        \]
        with the two extremal terms landing in $\Cyc(A;\bar{A})$, where they happen to lie in the image of the isomorphism $\iota\maps \overline{\Bar(A)}\to \Cyc(A;\bar{A})$ from Observation \ref{obs:bar and cyc(a-bar) are the same}.
        In fact, we have
        \[
        \iota(\sigma-1) [a_1|\ldots|a_n] = -a_1[a_2|\ldots|a_n] + (-1)^{\epsilon_{n-1}(|a_n|-1)} a_n[a_1|\ldots|a_{n-1}]
        \]
        so that overall there is the relation
        \[
        d_{\Cyc}\tau = \iota(\sigma-1) + \tau d_{\Bar}.
        \]
        In particular, since $\sigma$-invariant tensors vanish on $(\sigma-1)$, the restriction of $\tau$ to those tensors is a chain map.
    \end{proof}

\begin{prop}\label{prop:pullback square connected}
    If $A$ is any connected dg algebra, the $0$-th cyclic cohomology of $\Cyc(A)$ is exactly the cycle-invariant subspace of the $0$-th Bar cohomology. That is, the natural map $\tau\maps \Bar(A)^\sigma \to \Cyc(A)$ induces a natural isomorphism $\H^0(\Bar(A)^\sigma) \xrightarrow{\sim} \H^0(\Cyc(A))$, and there is a natural pullback square of injections
    \[
    \xymatrix{
    \H^0(\Cyc(A)) \ar@{^(->}[r] \ar@{^(->}[d] & \Bar(A)^\sigma \ar@{^(->}[d]\\
    \H^0(\Bar(A)) \ar@{^(->}[r] & \Bar(A)
    }
    \]
where all but the top arrow are the obvious maps.
\end{prop}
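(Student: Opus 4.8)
The plan is to exploit the rigidity of cohomological degree $0$ for a connected algebra, where the comparison can be carried out entirely at the level of cochains. Since $A$ is connected, $\bar A$ is concentrated in strictly positive degrees, so each of $\Cyc(A)$, $\Bar(A)$, and the subcomplex $\Bar(A)^\sigma$ lives in non-negative cohomological degrees and has no cochains in degree $-1$. Hence in every case $\H^0$ is simply the module of degree-$0$ cocycles, with no coboundaries to divide out. First I would pin down $\Cyc(A)^0$ explicitly: a homogeneous element $a_0[a_1|\ldots|a_n]$ has degree $|a_0|+\sum_i(|a_i|-1)$, and connectivity forces $|a_0|=0$ and every $|a_i|=1$, so that $a_0\in A^0=R$. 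Thus $\Cyc(A)^0=\tau(\Bar(A)^0)$, and $\tau$ restricts to an isomorphism of degree-$0$ cochains.

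The crux is then to decide which of these cochains are cocycles, and here the key observation is that the two correction terms in the identity $d_{\Cyc}\tau=\iota(\sigma-1)+\tau d_{\Bar}$ land in complementary direct summands. The additive splitting $A=R\oplus\bar A$, available precisely because $A$ is connected, yields $\Cyc(A;A)=\Cyc(A;R)\oplus\Cyc(A;\bar A)$; the term $\tau(d_{\Bar}z)$ has leftmost tensor factor $1\in R$ and so lies in $\Cyc(A;R)$, while $\iota(\sigma-1)z$ has leftmost factor in $\bar A$ and so lies in $\Cyc(A;\bar A)$. I expect this to be the main point of the argument: it is what converts the vanishing of a single sum into the \emph{conjunction} of two independent conditions. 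Concretely, $d_{\Cyc}\tau(z)=0$ forces both summands to vanish, which by injectivity of $\tau$ and $\iota$ means $d_{\Bar}z=0$ and $\sigma z=z$ simultaneously, i.e. $z$ is a cocycle of $\Bar(A)^\sigma$. Together with the absence of degree-$0$ coboundaries, this gives the natural isomorphism $\H^0(\Bar(A)^\sigma)\xrightarrow{\sim}\H^0(\Cyc(A))$ induced by $\tau$.

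To finish with the pullback square, I would identify the pullback of the evident inclusions $\H^0(\Bar(A))\hookrightarrow\Bar(A)\hookleftarrow\Bar(A)^\sigma$ as their intersection inside $\Bar(A)$, namely the degree-$0$ cocycles that are also $\sigma$-invariant. By the previous paragraph this intersection is exactly $\H^0(\Bar(A)^\sigma)$, onto which $\H^0(\Cyc(A))$ maps isomorphically via $\tau$. It then remains to check commutativity: the left vertical map is induced by the augmentation $\Cyc(A)\to\Bar(A)$ of \eqref{eq:identification of cyc and bar}, which on a cocycle $\tau(z)=1[a_1|\ldots|a_n]$ returns $\eta(1)[a_1|\ldots|a_n]=z$; thus both paths around the square send the class of $\tau(z)$ to $z\in\Bar(A)$. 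Injectivity of this left vertical arrow, and hence of the whole square, follows from the long exact sequence of Lemma \ref{lem:comparing cyclic and bar}, which begins $0\to\H^0(\Cyc(A))\to\H^0(\Bar(A))$. Naturality in $A$ is automatic, since every map in sight — $\tau$, $\iota$, the augmentation, and the inclusion of $\sigma$-invariants — is natural.
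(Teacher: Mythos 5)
Your proof is correct and follows essentially the same route as the paper's: both arguments rest on connectivity forcing $\H^0$ to consist of degree-$0$ cocycles, on the identity $d_{\Cyc}\tau=\iota(\sigma-1)+\tau d_{\Bar}$, and on identifying the pullback as the intersection $\ker(d_{\Bar})\cap\ker(\sigma-1)$ inside $\Bar^0(A)$. Your direct-sum splitting $\Cyc(A;A)=\Cyc(A;R)\oplus\Cyc(A;\bar{A})$ is just the cochain-level unpacking of the short exact sequence and connecting homomorphism that the paper invokes via Lemma~\ref{lem:comparing cyclic and bar}.
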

\begin{proof}
    Since $A$ is connected, $\Bar(A)$ is trivial in degree $(-1)$ and $\Cyc(A;\bar{A})$ is trivial in degree $0$. Thus there are no coboundaries in those degrees and we have compatible inclusions
    \[
    \xymatrix{
    0\ar[r] & \H^0(\Cyc(A)) \ar[r] \ar@{..>}[d]_-{\exists} & \H^0(\Bar(A)) \ar[r]^-{d} \ar@{^(->}[d] & \H^1(\Cyc(A;\bar{A})) \ar@{^(->}[d] \\
    0 \ar[r] & \Bar^0(A)^\sigma  \ar[r] & \Bar^0(A) \ar[r]^-{\iota(\sigma-1)} & \Cyc^1(A;\bar{A})
}
    \]
    where the top row is exact by Lemma \ref{lem:comparing cyclic and bar} and the bottom row is exact since $\iota\maps \Bar^0(A)\to \Cyc^1(A;\bar{A})$
    is an isomorphism.   
    An easy diagram chase shows that there is a natural map filling-in the diagram with the dotted arrow. This exhibits the claimed square, which we next show to be a pullback.
    
    Since we have a presentation
    \[
    \H^0(\Bar(A)^\sigma) = \ker(d_{\Bar}) \cap \ker(\sigma-1) \subseteq \Bar^0(A),
    \]
    this cohomology of $\Bar(A)^\sigma$ is already the pullback of the two kernel inclusions into $\Bar(A)$. So there is a natural map $\tilde{\tau}\maps \H^0(\Cyc(A))\dashrightarrow \H^0(\Bar(A)^\sigma)$. But the composition $$\Bar(A)^\sigma\xrightarrow{\tau} \Cyc(A)\to \Bar(A)$$  is the natural inclusion of $\sigma$-invariants, so $\tau\maps \H^0(\Bar(A)^\sigma) \to \H^0(\Cyc(A))$ is an inverse to $\tilde{\tau}$, and thus $\H^0(\Cyc(A))$ satisfies the universal property of the pullback as well.
\end{proof}

Lastly, we often wish to compare the cohomologies of different algebras. For this we have the following.
\begin{lemma}\label{lem:H1-injective}
    Let $f\maps A\to B$ be a homomorphism of homologically connected augmented dg algebras with no $R$-torsion. If $f$ induces an injection $\H^1(A)\to \H^1(B)$, then the maps
    \[
    \H^0(\Cyc(A))\to \H^0(\Cyc(B)) \text{ and } \H^0(\Bar(A))\to \H^0(\Bar(B))
    \]
    are injections as well.
\end{lemma}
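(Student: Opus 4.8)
The plan is to prove the statement for $\Bar$ first and deduce the $\Cyc$ case formally from it. Throughout I would filter both $\Bar(A)$ and $\Bar(B)$ by tensor (word) length, exactly as in the quasi-isomorphism invariance proposition proved above. This filtration is functorial, so $f$ induces a map of the associated spectral sequences whose $E_1$-pages are identified via K\"unneth with $\Bar(H(A))\to \Bar(H(B))$, the map being $\Bar$ applied to $\H^*(f)$.

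First I would reduce to the case that $A$ and $B$ are \emph{connected}. Homological connectedness lets me replace each algebra by a connected model without changing $\H^0(\Bar(-))$ or $\H^0(\Cyc(-))$, by quasi-isomorphism invariance; the point needing care is that the models be chosen compatibly with $f$, which I would arrange by passing to the connected subalgebra with $1$-part $\H^1(A)$ and higher part $A^{\geq 2}$ and factoring $f$ accordingly. After this reduction $\Bar(A)$ and $\Bar(B)$ are concentrated in non-negative cohomological degree, so the two spectral sequences converge, and $\H^1(A)=\ker(d\maps A^1\to A^2)$ is a submodule of the torsion-free module $A^1$, hence itself torsion free (and likewise for $B$).

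The heart of the argument is total degree $0$. Since every entry $\desus\alpha$ of a length-$p$ tensor contributes degree $|\alpha|-1\geq 0$, a class of total degree $0$ forces every $\alpha$ to lie in $\H^1$, so the total-degree-$0$ part of the $E_1$-page is the tensor algebra $T(\H^1(A))$ and the induced map is $T(\H^1(f))$. Because $\H^1(A)\into \H^1(B)$ is an injection of torsion-free, hence flat, modules over the PID $R$, each tensor power $\H^1(A)^{\otimes n}\to \H^1(B)^{\otimes n}$ is injective, so $T(\H^1(f))$ is injective in total degree $0$. No differential can enter total degree $0$, as its source would sit in total degree $-1$, which is empty; hence $E_\infty^0$ is a submodule of $E_1^0$ compatibly with $f$, and the map on $E_\infty^0$ is injective. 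As the word-length filtration on $\H^0(\Bar(-))$ is exhaustive and Hausdorff, injectivity on the associated graded $E_\infty^0$ upgrades to injectivity of $\H^0(\Bar(A))\to \H^0(\Bar(B))$. For the cyclic statement I would then invoke Proposition~\ref{prop:pullback square connected}, which in the connected case gives a natural injection $\H^0(\Cyc(A))\into \H^0(\Bar(A))$ compatible with the augmentation $\Cyc\to\Bar$, and similarly for $B$; naturality makes the relevant square commute, so the composite $\H^0(\Cyc(A))\into \H^0(\Bar(A))\to \H^0(\Bar(B))$ is injective, and since it factors through $\H^0(\Cyc(A))\to \H^0(\Cyc(B))$, that first map is injective as well.

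I expect the main obstacle to be precisely the injectivity of $T(\H^1(f))$ on the $E_1$-page: this is where torsion-freeness of $\H^1$ is essential, since tensoring can destroy injectivity for torsion modules (already one tensor power kills an injection like $\Z/2\into\Z/4$). This is exactly the property that holds automatically for simplicial cochain algebras and whenever $R$ is a field, and it is also what guarantees the existence of the torsion-free connected models used in the reduction; I would therefore foreground it as the one hypothesis doing real work, with the convergence and the reduction to connected models being the accompanying technical setup.
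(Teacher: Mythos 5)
Your proposal follows essentially the same route as the paper: filter $\Bar(-)$ by tensor length, identify the total-degree-$0$ part of the $E_1$-page with $T(\H^1(A))\to T(\H^1(B))$, observe that no differential can enter total degree $0$, and deduce the $\Cyc$ statement from the commutative square relating $\Cyc$ to $\Bar$ (the paper's stated alternative for that step). Your extra care about reducing to connected models and about torsion-freeness of $\H^1$ being what makes the tensor powers of an injection injective is a worthwhile elaboration of points the paper leaves implicit, but it is not a different argument.
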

\begin{proof}
        We show that $\H^0(\Bar(A))\to \H^0(\Bar(B))$ is injective by considering the induced map on spectral sequences associated to the filtration of $\Bar(-)$ by tensor degree, used in the proof of quasi-isomorphism invariance. On its $E_1$-page we get the map $\Bar(H(A))\to \Bar(H(B))$ by the K\"{u}nneth isomorphism, which in total degree $0$ is the map of tensor (co)algebras $T(H^1(A))\to T(H^1(B))$ and is thus injective.

    Since there are no elements in strictly negative degrees, no differentials can hit the terms in total degree $0$ and the $E_{\infty}$-pages include into the $E_1$-pages:
    \[
\xymatrix{
E_\infty^0(A) \ar@{^(->}[r] \ar[d] & E_1^0(A) \ar@{^(->}[d]^{\text{$\H^1$-injectivity}} \\
E^0_{\infty}(B) \ar@{^(->}[r] & E_1^0(B).
}
\]
showing that the induced map on $E_{\infty}$ is already injective. It follows that the map on $0$-th cohomology is similarly injective.

The same argument works to show that $\H^0(\Cyc(A))\to \H^0(\Cyc(B))$ is injective. Alternatively, one can use the commutative square
\[
\xymatrix{
\H^0(\Cyc(A)) \ar[r] \ar[d] & \H^0(\Cyc(B)) \ar[d] \\
\H^0(\Bar(A)) \ar[r] & \H^0(\Bar(B))
}
\]
in which all but the top arrow were shown to be injective. It follows that the top arrow is injective as well.
\end{proof}

The object $H^0(\Cyc(A))$ gives rise to explicit invariants of words in fundamental groups, as follows.

\section{Letter braiding invariants of loops}
Given a pointed simplicial set (or topological space) $Y$, let $C^*(Y;R)$ denote its simplicial (or singular) cochain dg algebra equipped with augmentation by restriction to the basepoint.
\begin{notation}
The Cyclic Bar complex of $Y$ will be denoted by
\[
\Cyc(Y;R) := \Cyc(C^*(Y;R)).
\]
Its cohomology $\H^*(\Cyc(Y;R))$ will be denoted by $\H^*_{\Cyc}(Y;R)$.

Similarly, the linear Bar complex of $Y$ will be denoted by
\[
\Bar(Y;R) := \Bar(C^*(Y;R)) \quad ( = \Cyc(C^*(Y;R); R ) )
\]
and its cohomology will be denoted by $\H^*_{\Bar}(Y;R)$.
\end{notation}
Since the cyclic and linear Bar constructions are invariant under quasi-isomorphisms, any homotopy equivalence of spaces induces an isomorphism on $\H^*_{\Cyc}$ and $\H^*_{\Bar}$.

In light of this, \emph{Letter braiding} is a process by which one attaches invariants to loops $S^1\to Y$, as follows.
First, Example \ref{ex:wedge of circles} shows that the circle $S^1$ has
\[
H^0_{\Cyc}(S^1;R) \cong R[t].
\]
Therefore, a loop $\ell \maps S^1\to Y$ defines a pullback
\[
\ell^*\maps H^0_{\Cyc}(Y;R) \to H^0_{\Cyc}(S^1;R) \xrightarrow{\sim} R[t],
\]
sending every element $T\in H^0_{\Cyc}(Y;R)$ to a polynomial $\ell_T(t)\in R[t]$.
\begin{definition}[Cyclic letter-braiding]\label{def:letter braiding}
    Let $\ell\maps S^1 \to Y$ be a pointed loop. For every $T\in \H^0_{\Cyc}(Y;R)$ we define the $T$-th \emph{letter braiding invariant} of $\ell$ to be the \underline{linear coefficient} of the polynomial $\ell_T(t)$ determined by $\ell^*(T)\in \H^0_{\Cyc}(S^1;R)\cong R[t]$.
\end{definition}
Quasi-isomorphism invariance implies further that letter braiding invariants only depends on the homotopy class $[\ell]\in \pi_1(Y)$.

The analogous definition with $\H^0_{\Bar}(Y;R)$ in place of $\H^0_{\Cyc}(Y;R)$ was the subject of our prequel \cite{gadish_letter-braiding_2023}.
In the case of a wedge of circles, as in Example \ref{ex:wedge of circles}, the output of this theory is invariants of words in the free group $F_n$, explicitly given in Equation \eqref{eq:letter braiding}.

\begin{prop}\label{prop:cyclic invariants through linear invariants}
    The cyclic letter braiding invariant for $T\in \H^0_{\Cyc}(Y;R)$ coincides with the analogous linear letter braiding invariants of the image of $T$ in $\H^0_{\Bar}(Y;R)$.
\end{prop}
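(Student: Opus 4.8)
The plan is to leverage the naturality of the augmentation chain map $\Cyc(A)\to\Bar(A)$ from \eqref{eq:identification of cyc and bar}, reducing the claim to an explicit computation of that map on the circle. First I would record that the degree-$0$ map $\phi_Y\maps \H^0_{\Cyc}(Y;R)\to \H^0_{\Bar}(Y;R)$ is natural in $Y$, since it is induced by the cochain-level map applying the augmentation $\eta$ to the left-most tensor factor. Consequently any pointed loop $\ell\maps S^1\to Y$ produces a commutative square
\[
\xymatrix{
\H^0_{\Cyc}(Y;R) \ar[r]^-{\phi_Y} \ar[d]_-{\ell^*} & \H^0_{\Bar}(Y;R) \ar[d]^-{\ell^*} \\
\H^0_{\Cyc}(S^1;R) \ar[r]^-{\phi_{S^1}} & \H^0_{\Bar}(S^1;R),
}
\]
so that for $T\in \H^0_{\Cyc}(Y;R)$ with image $\phi_Y(T)\in \H^0_{\Bar}(Y;R)$ we have $\ell^*(\phi_Y(T)) = \phi_{S^1}(\ell^*(T))$ inside $\H^0_{\Bar}(S^1;R)$.

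The crux is then to pin down the bottom edge $\phi_{S^1}$ under the identifications $\H^0_{\Cyc}(S^1;R)\cong R[t] \cong \H^0_{\Bar}(S^1;R)$ coming from Example \ref{ex:wedge of circles}. For the single circle the Bar complex is $B_1 = R[e]$ with $e=\desus\epsilon$ concentrated in degree $0$; since $e$ is the unique generator and lies in even degree, the cycle operator $\sigma$ fixes every monomial $e^{\otimes k}$, so $(B_1)^\sigma = B_1$. Using the isomorphism $\tau\maps \H^0(\Bar(A_1)^\sigma)\xrightarrow{\sim}\H^0_{\Cyc}(S^1;R)$ of Proposition \ref{prop:pullback square connected} together with the fact that $\Bar(A)^\sigma\xrightarrow{\tau}\Cyc(A)\to\Bar(A)$ is the inclusion of $\sigma$-invariants, I would check that $\tau(e^{\otimes k})$ augments to $[e|\ldots|e]=e^{\otimes k}$, so that $\phi_{S^1}$ is literally the identity on $R[t]$. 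In particular it preserves the linear coefficient.

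Combining the two steps, $\ell^*(\phi_Y(T))$ and $\ell^*(T)$ name the same polynomial in $R[t]$, hence have the same linear coefficient; by Definition \ref{def:letter braiding} and its linear analogue these coefficients are exactly the linear letter-braiding invariant of $\phi_Y(T)$ and the cyclic letter-braiding invariant of $T$, giving the proposition. I expect the only delicate point to be verifying that the two a priori separate copies of $R[t]$ — one governing the cyclic theory and one the linear theory on $S^1$ — are matched compatibly by $\phi_{S^1}$; once triviality of $\sigma$ on $B_1$ identifies $\phi_{S^1}$ with the identity, everything else is formal naturality.
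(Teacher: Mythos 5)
Your proposal is correct and follows essentially the same route as the paper: both proofs invoke naturality of the augmentation map $\Cyc\to\Bar$ against $\ell^*$ to reduce to comparing the two identifications of the circle's cohomology with $R[t]$. The only difference is that you explicitly verify that $\phi_{S^1}$ is the identity on $R[t]$ (via triviality of $\sigma$ on $B_1$), a compatibility the paper records implicitly as the equality arrow in the right-hand column of its diagram.
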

\begin{proof}
    Let $\ell\maps S^1\to Y$ be a pointed loop. By functoriality of the Bar constructions we have a commutative diagram
    \[
    \xymatrix{
    \H^0_{\Cyc}(Y;R) \ar[r]^{\ell^*} \ar[d] & \H^0_{\Cyc}(S^1;R) \ar[r]^-{\cong} \ar[d] & R[t] \ar@{=}[d] \\
        \H^0_{\Bar}(Y;R) \ar[r]^{\ell^*} & \H^0_{\Bar}(S^1;R) \ar[r]^-{\cong} & R[t]
    }
    \]
    showing that the same polynomial results whether $T$ is sent to $\H^0_{\Cyc}(S^1;R)$ or to $\H^0_{\Bar}(S^1;R)$.
\end{proof}
It follows that cyclic letter braiding invariants are merely a subset of the linear letter braiding invariants. In particular, formulae and algorithms for their calculation are discussed in \cite{gadish_letter-braiding_2023}.

With the analysis of the previous section we can now prove Theorem \ref{thm:topological characterization}.
Recall that we let $F_S$ denote the free group generated by basis $(s\in S)$. Then homomorphisms $F_S\to R$ are in natural bijection with the set of functions $R^S = \{h\maps S\to R \}$, such as those considered in Equation \eqref{eq:letter braiding}. 

\begin{comment}
\begin{cor}\label{cor:topological implication of cyclic}
    Let $Y$ be a pointed connected simplicial set (or topological space), and fix a set of group generators $S\subseteq \pi_1(Y)$. Then the module $\H^0_{\Cyc}(Y;R)$ is identified with the collection of noncommutative polynomials in variables $h\in R^S$ that are
    \begin{itemize}
        \item cycle-invariant, and
        \item the evaluation on words $w\in F_S$ according to Equation \eqref{eq:letter braiding} descends to a well-defined function $\pi_1(Y)$.
    \end{itemize} 
\end{cor}
\end{comment}
\begin{proof}[Proof of Theorem \ref{thm:topological characterization}]
    Let $Y$ be a pointed connected topological space (or, given a simplicial set take $Y$ to be its geometric realization). Construct a simplicial set $X$ with a simplicial approximation map $|X|\to Y$ such that
    \begin{itemize}
        \item the $0$-skeleton of $X$ consists only of the basepoint,
        \item the nondegenerate $1$-simplices of $X$ are in bijection with the set $S$, and for each $s\in S$ the corresponding loop in $|X|$ maps to $Y$ so as to represent the homotopy class $s\in \pi_1(Y)$, and
        \item the map $|X|\to Y$ is a weak homotopy equivalence.
    \end{itemize} 
    By the quasi-isomorphism invariance, the induced map $\H^0_{\Cyc}(Y;R) \to \H^0_{\Cyc}(X;R)$ is an isomorphism, so it suffices to prove the result for $X$.
    
    Since the $0$-skeleton of $X$ is the basepoint, its simplicial cochain algebra $C^*_{\Delta}(X;R)$ is connected. Thus Proposition \ref{prop:pullback square connected} shows that $\H^0_{\Cyc}(X;R)$ is the pullback of
    \[
    \xymatrix{
    & Bar^0(C^*(X))^\sigma \ar@{^(->}[d] \\
    \H^0_{\Bar}(X;R) \ar@{^(->}[r] & \Bar^0(C^*(X;R)).
    }
    \]
    By definition $C^1(X;R) = R^S$, so $\H^0_{\Cyc}(X;R)$ is identified with the collection of cycle-invariant tensors of functions $h\in R^S$ that also lie in $\H^0_{\Bar}(X;R)$.

    By \cite[Theorem 6.3]{gadish_letter-braiding_2023}, the restriction to the $1$-skeleton $\H^0_{\Bar}(X;R)\to \H^0_{Bar}(X^{\leq 1};R)$ has image consisting exactly of those tensors whose letter braiding invariants on the free group $F_S$ that descend to well-defined functions on $\pi_1(X)$. If the tensor happens to also be cycle-invariant, then it already lies in the image of $\H^0_{\Cyc}(X;R)$.
\end{proof}

Lastly, we relate cycle-invariance to conjugacy in the fundamental group.
\begin{lemma}\label{lem:cycle vs conjugation}
    Let $Y_S = \bigvee_{s\in S} S^1$ be a wedge of circles. Every tensor $T\in \H^0_{Bar}(Y_S;R) = T(\desus R^S)$ defines a function $I_T\maps F_S\to R$ via letter-braiding, as in Equation \eqref{eq:letter braiding}. Then the following are equivalent:
    \begin{enumerate}
        \item    $T$ is cycle-invariant.
        \item $I_T$ is a conjugation invariant function on $F_S$, namely, a class function.
    \end{enumerate}
\end{lemma}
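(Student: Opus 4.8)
The plan is to reinterpret letter-braiding as a pairing against the Magnus expansion, and thereby translate both conditions into a single statement about a linear functional on a completed free algebra. Let $W$ be the free $R$-module on symbols $\{X_s\}_{s\in S}$ and let $\widehat{T}(W)=\prod_{m\ge0}W^{\otimes m}$ be the completed free associative algebra, filtered by tensor weight. Let $M\maps F_S\to\widehat{T}(W)^{\times}$ be the Magnus homomorphism determined by $M(s)=1+X_s$, so that $M(s^{-1})=\sum_{m\ge0}(-1)^mX_s^m$. Pairing a weight-$n$ tensor $[h_1|\ldots|h_n]\in T(\desus R^S)$ with a monomial $X_{t_1}\otimes\cdots\otimes X_{t_n}$ via $\prod_i h_i(t_i)$ (and declaring the pairing zero across different weights) gives a bilinear form $\langle\,\cdot\,,\cdot\,\rangle$ for which $I_T(w)=\langle T,M(w)\rangle$; checking this against \eqref{eq:letter braiding} is routine, and the $\le$-versus-$<$ convention for inverse letters is exactly what reproduces the geometric series $M(s^{-1})$. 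First I would record two preliminaries. Because each $\desus h_i$ sits in degree $0$, the sign in $\sigma$ from Definition \ref{def:cycle operator} is trivial, so $\sigma$ is the honest cyclic rotation. And $\sigma$-invariance of $T$ is equivalent to the \emph{trace property} $\langle T,ab\rangle=\langle T,ba\rangle$ for all $a,b\in\widehat{T}(W)$: on monomials both conditions assert that $\langle T,X_{t_1}\cdots X_{t_n}\rangle$ depends only on the cyclic word $(t_1,\ldots,t_n)$, the single rotation step being the commutator $[X_{t_1},X_{t_2}\cdots X_{t_n}]$.

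Granting this dictionary, the implication $(1)\Rightarrow(2)$ is immediate: if $T$ is cycle-invariant then $\langle T,\cdot\,\rangle$ is a trace, so using that $M$ is a homomorphism, $I_T(gwg^{-1})=\langle T,M(g)M(w)M(g)^{-1}\rangle=\langle T,M(g)^{-1}M(g)M(w)\rangle=\langle T,M(w)\rangle=I_T(w)$, whence $I_T$ is a class function.

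For the converse $(2)\Rightarrow(1)$ I would argue as follows. Being a class function, $I_T$ is in particular invariant under conjugation by each generator $s\in S$, which gives $\langle T,M(s)\,y\,M(s)^{-1}\rangle=\langle T,y\rangle$ for every $y$ in the $R$-span of $\{M(w):w\in F_S\}$. The key structural input is that this span is dense in $\widehat{T}(W)$ for the weight filtration: the integral identities $M(s)-1=X_s$, $M(st)-M(s)-M(t)+1=X_sX_t$, and their iterates show that $R[F_S]$ surjects onto $T(W)/(\text{weight}>m)$ for every $m$. Since $\langle T,\cdot\,\rangle$ is continuous, vanishing on tensors of weight exceeding the top weight of $T$, the identity extends to all $y\in\widehat{T}(W)$. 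Substituting $y=x\,M(s)$ collapses the conjugation to $\langle T,M(s)x\rangle=\langle T,xM(s)\rangle$, and cancelling the common term $\langle T,x\rangle$ coming from the constant part of $M(s)=1+X_s$ leaves $\langle T,X_sx\rangle=\langle T,xX_s\rangle$ for all $s\in S$ and all $x$. Taking $x=X_{t_2}\cdots X_{t_n}$, this is precisely invariance of $\langle T,\cdot\,\rangle$ under a single cyclic rotation, hence $\sigma T=T$.

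The main obstacle, and the one place requiring care, is keeping the argument valid over an arbitrary PID rather than only over a $\Q$-algebra: any approach through $\log M(s)$ and $\exp(\operatorname{ad})$ would introduce denominators. This is circumvented by working with the \emph{finite} element $M(s)=1+X_s$ directly, so that the crucial rearrangement $\langle T,M(s)x\rangle=\langle T,xM(s)\rangle$ and the density computation use only integer combinations. The two remaining verifications—that $I_T(w)=\langle T,M(w)\rangle$ recovers \eqref{eq:letter braiding}, and that the span of Magnus images is weight-dense—are routine but deserve to be stated carefully, the latter being the genuine content of the converse direction.
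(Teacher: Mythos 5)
Your argument is correct, but it is organized quite differently from the paper's. The paper works entirely inside the group ring: it quotes from the prequel the evaluation $I_{[h_1|\ldots|h_n]}\bigl((s_1-1)\cdots(s_k-1)\bigr)=\delta_{kn}\,h_1(s_1)\cdots h_n(s_n)$, deduces $I_T$ versus $I_{\sigma T}$ on such monomials, and then handles the two implications by hand --- the forward direction via Fox's expansion of $w-1$ together with the identity $ws=(w-1)(s-1)+(s-1)+w$ and induction on word length, and the converse by expanding $(s_1-1)\cdots(s_k-1)$ into a signed sum of words and cycling the letter $s_1$ using conjugation-invariance. Your Magnus-expansion formulation repackages the same duality (the quoted evaluation is precisely the statement that $I_T(w)=\langle T,M(w)\rangle$, since $(s_1-1)\cdots(s_k-1)\mapsto X_{s_1}\cdots X_{s_k}$ exactly), but it replaces both hand computations with a single conceptual mechanism: cycle-invariance of $T$ is the trace property of $\langle T,\cdot\rangle$ on all of $\widehat{T}(W)$, conjugation-invariance of $I_T$ is the trace property on the image of $R[F_S]$, and the density of that image bridges the two. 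The substitution $y=xM(s)$ with $M(s)=1+X_s$ is a clean replacement for the paper's inclusion--exclusion argument, and you are right that avoiding $\log M$ keeps everything integral. What you lose is only that the density lemma and the identity $I_T(w)=\langle T,M(w)\rangle$ must be checked against the $\le$-versus-$<$ convention of \eqref{eq:letter braiding}; both checks are genuinely routine (the weight-$m$ part of the image of $R[F_S]$ is \emph{exactly} $W^{\otimes m}$, with no error terms).

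One step you pass over silently: at the very end you conclude $\sigma T=T$ from $\langle \sigma T-T,\,y\rangle=0$ for all monomials $y$. This needs nondegeneracy of the pairing in the $T$-variable, i.e.\ injectivity of $T(\desus R^S)\to \operatorname{Func}\bigl(\coprod_n S^n,R\bigr)$, which is not automatic when $S$ is infinite and $R$ is only a PID (it amounts to injectivity of $(R^S)^{\otimes n}\to R^{S^n}$, provable using that $R^S$ is torsion-free, hence flat, over a PID). The paper deals with the same issue by citing the uniqueness statement \cite[Theorem 1.1(4)]{gadish_letter-braiding_2023}; you should either invoke that citation or supply the flatness argument.
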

\begin{proof}
Let $[h_1|\ldots|h_n]$ be a tensor of functions $h_i\maps S\to R$. Its letter-braiding invariant $I_{[h_1|\ldots|h_n]}\maps F_S\to R$ extends to the group ring $R[F_S]$ linearly. Then by \cite[Propsition 5.6(2)]{gadish_letter-braiding_2023}, for every $m$-tuple of generators $s_1,\ldots,s_k\in S$ we have
\[
I_{[h_1|\ldots|h_n]}((s_1-1)(s_2-1)\cdots (s_k-1)) = \begin{cases}
    h_1(s_1)h_2(s_2)\cdots h_n(s_n) & k=n \\
    0 & \text{otherwise}.
\end{cases}
\]
It follows that for the cyclic shift $\sigma[h_1|\ldots|h_n]=[h_n|h_1|\ldots|h_{n-1}]$ we have
\[
I_{[h_1|\ldots|h_n]}((s_1-1)(s_2-1)\cdots (s_k-1)) = I_{\sigma[h_1|\ldots|h_n]}((s_k-1)(s_1-1)\cdots (s_{k-1}-1)).
\]
Letter-braiding invariants are linear in $T$ in the sense that $I_{T_1+T_2} = I_{T_1}+I_{T_2}$, so the last equality immediately gives that
\[
I_{T}((s_1-1)(s_2-1)\cdots (s_k-1)) = I_{\sigma T}((s_k-1)(s_1-1)\cdots (s_{k-1}-1))
\]
for all tensors\footnote{An equivalent formulation is that letter braiding is $\sigma$-invariant, where we endow the group ring by a cycle operator $\sigma\maps R[F_S]\to R[F_S]$ acting on the basis of monomials $(s_1-1)\cdots (s_k-1)$ by cyclically permuting the terms to the right. This is distinct from a cyclic permutation of letters in individual group elements.}.

Now, if $w\in F_S$ is any word, $w-1$ can be written as a sum of monomials $(s_1-1)\cdots (s_k-1)$ as observed by Fox \cite{Fox_1953} (see also the proof of \cite[Theorem 6.1]{gadish_letter-braiding_2023}). Thus it follows that for every generator $s\in S$
\[
I_T((w-1)(s-1)) = I_{\sigma T}((s-1)(w-1)).
\]
Furthermore, there is an equality in the group ring
\[
ws = (w-1)(s-1) + (s-1) + w,
\]
showing that 
\begin{align*}
I_T(ws) =  & \; I_{T}((w-1)(s-1)) + I_T(s - 1) + I_{T}(w) \\
=  & \; I_{\sigma T}((s-1)(w-1)) + I_{\sigma T}(s - 1) + I_{\sigma T}(w) + I_{T-\sigma T}(w)\\
=& \; I_{\sigma T}((s-1)(w-1) + (s-1) + w) + I_{T-\sigma T}(w)\\
= &\; I_{\sigma T}(sw) + I_{T-\sigma T}(w).
\end{align*}
It follows that if $T=\sigma T$ then $I_T(ws)=I_T(sw)$ for all generators $s\in S$, so the function $I_T$ is indeed conjugation invariant using $sw = s(ws)s^{-1}$ and induction on word length.

Conversely, suppose $I_T$ is conjugation invariant, so $I_T(sw) = I_T(ws)$ for all $w,s\in F_S$. We apply $I_T$ to the following expansion in the group ring
$$(s_1-1)\cdots (s_k-1) = \sum_{r=0}^{k-1} (-1)^{k-1-r}\sum_{2\leq i_1< \ldots< i_r \leq k} (s_1 s_{i_1}\ldots s_{i_r})-(s_{i_1}\ldots s_{i_r})$$
obtained by expanding $(s_2-1)\cdots (s_k-1)$ into a sum of words then multiplying by $(s_1-1)$.
Focusing on the summands consisting of words that start with $s_1$, conjugation by $s_1$ moves the first letter $s_1$ to the end of the word. Thus, conjugation invariance of $I_T$ shows that $I_T((s_1-1)\cdots (s_k-1))$
\begin{eqnarray*}
&=& \sum_{r=0}^{k-1} (-1)^{k-1-r}\sum_{1< i_1< \ldots< i_r \leq k} I_T(s_1 s_{i_1}\ldots s_{i_r})-I_T(s_{i_1}\ldots s_{i_r}) \\
&=& \sum_{r=0}^{k-1} (-1)^{k-1-r}\sum_{1< i_1< \ldots< i_r \leq k} I_T(s_{i_1}\ldots s_{i_r}s_1)-I_T(s_{i_1}\ldots s_{i_r})\\
&=& I_T((s_2-1)\cdots (s_k-1)(s_1-1)).
\end{eqnarray*}
It follows that $I_T = I_{\sigma T}$ on the whole group ring $R[F_S]$, and thus $T = \sigma T$ by the uniqueness of letter braiding functions \cite[Theorem 1.1(4)]{gadish_letter-braiding_2023}. %This concludes the proof in the special case.
\end{proof}
A categorical reformulation of the last lemma is the following.
\begin{cor}\label{cor:pullback for free grps}
    For $Y_S=\bigvee_{s\in S}S^1$, letter braiding defines a pullback square
    \[
    \xymatrix{
    \H^0_{\Cyc}(Y_S;R) \ar[r] \ar[d] & \H^0_{\Bar}(Y_S;R) \ar[d] \\
    \operatorname{Func}(F_S, R)^{F_S} \ar[r] & \operatorname{Func}(F_S,R),
    }
    \]
    where $F_S$ acts on functions $f\maps F_S\to R$ by conjugation $(w.f)(x) = f(wxw^{-1})$.
\end{cor}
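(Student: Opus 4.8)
The plan is to deduce this pullback square from two facts already in hand: the identification of $\H^0_{\Cyc}$ with cycle-invariant tensors (Proposition \ref{prop:pullback square connected}), and the equivalence between cycle-invariance and conjugation-invariance of letter-braiding functions (Lemma \ref{lem:cycle vs conjugation}). In effect the corollary is a repackaging, so the work is to pin down the four maps and then observe that both the upper-left corner and the fiber product of the other three corners are canonically the module of cycle-invariant tensors.

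First I would record the concrete description of the objects. The simplicial cochain algebra $A = C^*_\Delta(Y_S;R)$ is connected, since the $0$-skeleton of the standard model of a wedge of circles is the basepoint alone. As in Example \ref{ex:wedge of circles} and Notation \ref{not:infinite wedge}, the Bar complex $\Bar(A) = T(\desus R^S) = B_S$ is concentrated in degree $0$ with zero differential, so $\H^0_{\Bar}(Y_S;R) = \Bar^0(A) = T(\desus R^S)$ is the full tensor module, and Proposition \ref{prop:pullback square connected} identifies $\H^0_{\Cyc}(Y_S;R)$ with the submodule $\Bar^0(A)^\sigma$ of cycle-invariant tensors, the top horizontal map $\H^0_{\Cyc}(Y_S;R)\to \H^0_{\Bar}(Y_S;R)$ of the square being the inclusion of this submodule.

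Next I would analyze the bottom and right edges. The right vertical map is letter-braiding $T\mapsto I_T$, which is injective by the uniqueness statement \cite[Theorem 1.1(4)]{gadish_letter-braiding_2023}, and the bottom horizontal map is the tautological inclusion $\operatorname{Func}(F_S,R)^{F_S}\hookrightarrow \operatorname{Func}(F_S,R)$, also injective. Since one of the two maps into the lower-right corner is an inclusion of a submodule, the fiber product of the right and bottom maps reduces to the subset $\{\,T\in \H^0_{\Bar}(Y_S;R) : I_T \text{ is conjugation-invariant}\,\}$. This is exactly the content of Lemma \ref{lem:cycle vs conjugation}, which identifies that subset with the cycle-invariant tensors $\Bar^0(A)^\sigma$.

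Finally I would assemble the two identifications and check compatibility of the structure maps. Both $\H^0_{\Cyc}(Y_S;R)$ and the fiber product are now identified with $\Bar^0(A)^\sigma$, so it remains to see that the induced comparison map is the identity rather than a twist. For this I would use Proposition \ref{prop:cyclic invariants through linear invariants}: the left vertical map (cyclic letter-braiding of $T\in \H^0_{\Cyc}$) agrees with the linear letter-braiding $I_T$ of the image of $T$ in $\H^0_{\Bar}$, and that image is cycle-invariant, so $I_T$ is a class function by Lemma \ref{lem:cycle vs conjugation}; hence the left map lands in $\operatorname{Func}(F_S,R)^{F_S}$ and the square commutes. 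The canonical map from $\H^0_{\Cyc}(Y_S;R)$ to the fiber product is then the identity on $\Bar^0(A)^\sigma$, giving the pullback. The only real subtlety is this bookkeeping — verifying the four arrows are the maps I claim and that the two separately-established descriptions of cycle-invariant tensors match up on the nose; there is no further analytic or algebraic obstacle, the essential computation having been carried out in Lemma \ref{lem:cycle vs conjugation}.
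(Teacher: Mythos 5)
Your proof is correct and follows essentially the same route as the paper: identify $\H^0_{\Cyc}(Y_S;R)$ with the cycle-invariant tensors via Proposition \ref{prop:pullback square connected}, use Proposition \ref{prop:cyclic invariants through linear invariants} to see the square commutes, and invoke Lemma \ref{lem:cycle vs conjugation} to equate the fiber product with those same cycle-invariant tensors. The extra bookkeeping you carry out (injectivity of the edges, matching the two descriptions on the nose) is implicit in the paper's argument and adds no new content.
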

\begin{proof}
    Proposition \ref{prop:pullback square connected} identifies $\H^0_{\Cyc}(Y_S;R)$ with the cycle-invariant tensors in $\H^0_{\Bar}(Y_S;R) = T(\desus R^S)$. Thus the last lemma shows that the letter braiding invariants of elements in the image of $\H^0_{\Cyc}(Y_S;R)$ induce conjugation-invariant functions on $F_S$. And by Proposition \ref{prop:cyclic invariants through linear invariants}, the cyclic letter-braiding invariants of $\H^0_{\Cyc}(-;R)$ can be computed by factoring through $\H^0_{\Bar}(-;R)$. Thus the claimed square is indeed defined and commutative, and isomorphic to
    \[
    \xymatrix{
    T(\desus R^S)^\sigma \ar[r] \ar[d] & T(\desus R^S) \ar[d]\\
    \operatorname{Func}(R[F_S],R)^{F_S} \ar[r] &\operatorname{Func}(R[F_S],R).
    }
    \]
    But now Lemma \ref{lem:cycle vs conjugation} is exactly the statement that this square is a pullback.
\end{proof}

With this, we have all the pieces to prove the main theorem.

\begin{proof}[Proof of Theorem \ref{thm:main pullback}]
Let $Y$ be any pointed connected space or simplicial set. By simplicial approximation and quasi-isomorphism invariance of all complexes involved, we may assume that $Y$ is a simplicial set with $1$-skeleton $Y_S = \bigvee_{s\in S} S^1$ as in the proof of the Theorem \ref{thm:topological characterization}.

By naturality of letter braiding for the inclusion $Y_S\into Y$ we have a commutative cube
\[
    \begin{tikzcd}[row sep=1.5em, column sep = 1.5em]
    \H^0_{\Cyc}(Y;R) \arrow[rr, hookrightarrow,near start, "(1)"] \arrow[dr, hookrightarrow,near start, "(2)"] \arrow[dd] &&
    \H^0_{\Cyc}(Y_S;R) \arrow[dd] \arrow[dr,hookrightarrow,near start, "(2)"] \\
    & \H^0_{\Bar}(Y;R) \arrow[rr, hookrightarrow,near start, "(1)"] \arrow[dd, hookrightarrow, near start, "(4)"] &&
    \H^0_{\Bar}(Y_S;R) \arrow[dd, hookrightarrow, near start, "(4)"] \\
    \operatorname{Func}(\pi_1(Y),R)^{\pi_1(Y)} \arrow[rr, hookrightarrow,near start, "(3)"] \arrow[dr] && \operatorname{Func}(F_S,R) ^{F_S}\arrow[dr] \\
    & \operatorname{Func}(\pi_1(Y),R) \arrow[rr, hookrightarrow,near start, "(3)"] && \operatorname{Func}(F_S,R).
    \end{tikzcd}
\]
Indeed, Lemma \ref{lem:cycle vs conjugation} gives that cyclic letter braiding for $F_S$ lands in class functions, and thus it follows from the injectivity of the arrows labeled by (3) that $\H^0_{\Cyc}(Y;S)$ also lands in class functions of $\pi_1(Y)$. 

The statement we are proving is that the left-most wall of the cube is a pullback square.
This follows since sufficiently many of the other walls of the cube are already known to be pullbacks (the right-most wall by Corollary \ref{cor:pullback for free grps}, and the top wall by Proposition \ref{prop:pullback square connected}). In fact, all walls of the cube are pullbacks. But let us walk through the diagram chase.

Suppose the (non-cyclic) letter braiding invariant defined by $T\in \H^0_{\Bar}(Y)$ is a class function on $\pi_1(Y)$. Restricting $T$ to the $1$-skeleton of $Y$, we get a class function on $F_S$. But then, Corollary \ref{cor:pullback for free grps} shows that this restriction of $T$ admits a unique lift $\tilde{T}\in \H^0_{\Cyc}(Y_S;R)$.

So now we are dealing with the top wall of the cube, which we claim is a pullback square. Indeed, since the algebra $C^*_{\Delta}(Y;R)$ is connected and $C^1(Y;R) = C^1(Y_S;R)$, it follows that 
$$\H^0_{\Bar}(Y_S;R) = \Bar^0(Y_S;R) = T(\desus C^1(Y_S;R)) = T(\desus C^1(Y;R))=\Bar^0(Y;R) ,$$ so the restriction $\H^0_{Bar}(Y;R)\into \H^0_{\Bar}(Y_S;R)$ is isomorphic to the inclusion of $0$-cocycles $\ker(d_{\Bar})\into \Bar^0(Y;R)$. For the same reason, there is a similar equality for the cyclic complexes
$$\H^0_{\Cyc}(Y_S;R) \cong \Bar^0(Y_S;R)^\sigma = \Bar^0(Y;R)^\sigma$$ 
where isomorphism on the left is given by Proposition \ref{prop:pullback square connected} applied to $C^*_{\Delta}(Y_S;R)$. It follows that the top-back inclusion of the cube is isomorphic to the map $\H^0_{\Cyc}(Y;R) \to \Bar^0(Y;R)^\sigma$ appearing in Proposition \ref{prop:pullback square connected}. Together, these observations identify the top wall of the cube with the pullback square given in
Proposition \ref{prop:pullback square connected}, which shows that there exists a unique $\hat{T}\in \H^0_{\Cyc}(Y;R)$ lifting both $T$ and $\tilde{T}$. Injectivity of all arrows shows that $\hat{T}$ realizes the class function from which we started via cyclic letter braiding.
\end{proof}
With this, the finitely generated case of Theorem \ref{thm-main} follows quickly.
\begin{proof}[Proof of Theorem \ref{thm-main}]
    For finitely generated fundamental groups, we showed in \cite[Theorem 1.1(4)]{gadish_letter-braiding_2023} that letter braiding defines an isomorphism onto the finite type functions
    \[    
    \H^0_{\Bar}(Y;R) \xrightarrow{\sim} \varinjlim_{n} \Hom_{R}\left( R[\pi_1(Y)]/I^n , R\right) \subseteq \Hom_{R}\left( R[\pi_1(Y)] , R \right).
\]
With this, Theorem \ref{thm:main pullback} gives a pullback square
\[
\xymatrix{
\H^0_{Cyc}(Y;R) \ar@{^(->}[r] \ar@{^(->}[d] & \underset{n}{\varinjlim} \;\Hom_{R}\left( R[\pi_1(Y)]/I^n , R\right) \ar@{^(->}[d] \\
\Hom_{R}\left( R[\pi_1(Y)] , R \right)^{\pi_1(Y)} \ar@{^(->}[r] & \Hom_{R}\left( R[\pi_1(Y)] , R \right)
}
\]
showing that $H^0_{\Cyc}(Y;R)$ realizes exactly the class functions of finite type.
\end{proof}

\bibliographystyle{alpha}
\bibliography{bibliography}

\begin{thebibliography}{GJP91}

\bibitem[Fox53]{Fox_1953}
R.~H. Fox.
\newblock Free differential calculus. i: Derivation in the free group ring.
\newblock {\em Annals of Mathematics. Second Series}, 57:547–560, 1953.

\bibitem[Gad23]{gadish_letter-braiding_2023}
Nir Gadish.
\newblock Letter-braiding: a universal bridge between combinatorial group theory and topology, September 2023.
\newblock arXiv:2308.13635 [math].

\bibitem[GJP91]{getzler_differential_1991}
Ezra Getzler, John D.~S. Jones, and Scott Petrack.
\newblock Differential forms on loop spaces and the cyclic bar complex.
\newblock {\em Topology}, 30(3):339--371, November 1991.

\bibitem[Jon87]{jones_cyclic_1987}
John D.~S. Jones.
\newblock Cyclic homology and equivariant homology.
\newblock {\em Inventiones mathematicae}, 87(2):403--423, June 1987.

\bibitem[Lod11]{loday_free_2011}
Jean-Louis Loday.
\newblock Free loop space and homology, October 2011.
\newblock arXiv:1110.0405 [math].

\end{thebibliography}

\end{document}